\documentclass[11pt]{amsart}
\usepackage{latexsym,amsfonts,amssymb,graphicx,amsmath}
\usepackage{epstopdf}
\usepackage{amssymb}
\usepackage{enumerate}
\usepackage{enumitem}
\usepackage{booktabs}
\usepackage{stmaryrd}
\usepackage{color}
\usepackage{sidecap}
\usepackage[font=small,labelfont=bf]{caption}
\numberwithin{equation}{section}


\newcommand{\n}[1]{\boldsymbol{#1}}

\newcommand{\TG}{ T_\Gamma}
\newcommand{\ThG}{\mathcal{T}_h^\Gamma}

\newcommand{\EhGm}{\mathcal{E}_h^{\Gamma,-}}
\newcommand{\EhGp}{\mathcal{E}_h^{\Gamma,+}}

\newcommand{\revblue}[1]{\textcolor{black}{{#1}}}

\newcommand{\ra}[1]{\renewcommand{\arraystretch}{#1}}

\newcommand{\Omclosed}{\overline{\Omega}^{-}}

\newcommand{\Opclosed}{\overline{\Omega}^{+}}

\newtheorem{theorem}{Theorem}
\newtheorem{lemma}{Lemma}
\newtheorem{proposition}{Proposition}

\theoremstyle{definition}

\newtheorem{corollary}{Corollary}
\newtheorem{remark}{Remark}


\begin{document}

\title[Nitsche method for high contrast interface problems]{Robust flux error estimation of \revblue{an unfitted Nitsche} method for high-contrast interface problems}

\author[E. Burman]{Erik Burman \textsuperscript{1}}
\address{\textsuperscript{1} Department of Mathematics, University College London, London, UK}
\email{e.burman@ucl.ac.uk}
\author[J. Guzm\'an]{Johnny Guzm\'an\textsuperscript{2}}
\address{\textsuperscript{2} Division of Applied Mathematics, Brown University, Providence, RI 02912, USA}
\email{johnny\_guzman@brown.edu}
\author[M. A. S\'anchez]{Manuel A. S\'anchez\textsuperscript{3}}
\address{\textsuperscript{3} School of Mathematics, University of Minnesota, Minneapolis, MN 55455, USA}
\email{sanchez@umn.edu}
\author[M. Sarkis]{Marcus Sarkis\textsuperscript{4}}
\address{\textsuperscript{4}Department of Mathematical Sciences at Worcester Polytechnic Institute, 100 Institute Road, Worcester, MA 01609, USA }
\email{msarkis@wpi.edu}


\maketitle

\begin{abstract}
{\revblue{We prove an optimal error estimate for the flux variable for a stabilized unfitted Nitsche finite element method applied to an elliptic interface problem with discontinuous constant coefficients. Our result shows explicitly that this error estimate is totally independent of the diffusion coefficients}.}
{Interface problems; high-contrast; \revblue{unfitted Nitsche} method; finite elements.}
\end{abstract}

\section{Introduction}\label{section1}
In this paper we study the error estimation of \revblue{an unfitted Nitsche} finite element method for the following elliptic interface problem with discontinuous constant coefficients: Let $\Omega\subset \mathbb{R}^2$ be an open polygonal domain with an  immersed smooth interface $\Gamma$, such that $\overline{\Omega}=\revblue{\overline{\Omega^{-}} \cup \,\overline{\Omega^{+}}}$, and $\Gamma$ encloses either $\Omega^{-}$ or $\Omega^+$. Consider the problem
\begin{subequations}\label{Problem}
\begin{alignat}{3}
    -\nabla\cdot(\rho^{\pm} \nabla u^{\pm}) &= f^{\pm} \qquad &\mbox{in } &\Omega^{\pm}, \label{Problem:a} \\
                                           u^{\pm} &=0            &\mbox{on }&\partial \Omega^{\pm}\backslash\Gamma, \label{Problem:b}\\
                            \left[u\right]&=0            &\mbox{on }&\Gamma, \label{Problem:c} \\
  \left[\rho \nabla  u\cdot \n{n}  \right]&=0            &\mbox{on }&\Gamma. \label{Problem:d}
\end{alignat}
\end{subequations}
The jumps on the interface $\Gamma$ are defined as
\begin{equation}\label{jumps}
\left[\rho \nabla u\cdot \n{n}  \right]=\rho^-\nabla u^{-} \cdot \n{n}^{-}+ \rho^+\nabla u^{+} \cdot \n{n}^+ \quad\mbox{and}\quad \left[u\right]=u^{+}-u^{-},
\end{equation}
where $u^{\pm}=u|_{\Omega^{\pm}}$ and $\n{n}^{\pm}$ is the unit outward pointing normal to $\Omega^{\pm}$. We furthermore assume that the diffusion coefficients $\rho^{+}\ge \rho^{-}>0$ are constant.

There have been several numerical methods for problem \eqref{Problem}. See
for example \cite{MR0277119}, \cite{2013arXiv1311.4178X}, \cite{MR2837483}, \cite{MR2002258}, \cite{MR1941489}, \cite{MR2377272}, \cite{MR2571349}, \cite{MR2684351}, \cite{MR2738930}, \cite{MR2566075}, \cite{MR2740492}, \cite{MR2820966}, \cite{MR3051411}, \cite{MR2981355}, \cite{MR3138107}, \cite{MR3218337}, \cite{MR3268662}, \cite{GSS15}.  The method we will consider below uses meshes that are not necessarily aligned with the mesh (i.e. unfitted meshes). There are several papers dealing with methods (see \cite{MR3218337}, \cite{MR2981355}, \cite{MR2571349}, \cite{MR2738930}, \cite{MR2684351}, \cite{MR2377272}, \cite{GSS15}, \cite{MR2740492}, \cite{MR3268662}, \cite{MR2820966}) using unfitted meshes. One of the advantages of using unfitted meshes is the fact that re-meshing is not required for problems where the interface is moving. Nevertheless, the majority of the unfitted methods do not address the analysis of high contrast problems. Some exceptions are found \cite{MR2684351}, \cite{MR2257119},\cite{MR2820966}, \cite{MR3051411}, \cite{GSS15}. In particular in \cite{MR2684351}, energy error estimates independent of the contrast of the coefficients (i.e. $\rho^{+}\revblue{/}\rho^{-}$) were proved. However, the estimates were not completely independent of the coefficients, a factor of $1\revblue{/}\sqrt{\rho^{-}}$ was present in the right-hand side. 
More recently, in \cite{GSS15} an interface finite element method was designed and
certain error estimates independent of coefficients contrast were proved on problems with smooth interfaces. Specifically, the error estimate achieved in \cite{GSS15} for the energy error was of the form
\begin{equation}\label{esrho}
\|\sqrt{\rho} \nabla(u-u_h)\|_{L^2(\Omega)} \le \frac{C}{\sqrt{\rho^-}} \, h \|f\|_{L^2(\Omega)},
\end{equation}
were we also observe a factor of $1\revblue{/}\sqrt{\rho^{-}}$ in the estimate.
One of the key ingredients in \cite{GSS15} was to add a stabilization term that penalized the jump of the gradients across edges of the triangulation. This idea was borrowed from the stabilized Nitsche's methods developed by Burman and co-authors; see for example \cite{MR3051411}. Here in this paper
we analyze a variant of the method introduced in \cite{MR3051411} and prove the following error estimate
totally independent of contrast
\begin{equation}\label{erho}
\|\rho \nabla(u-u_h)\|_{L^2(\Omega)} \le C \, h \|f\|_{L^2(\Omega)},
\end{equation}
where constant $C$ is independent of $\rho^{\pm}$. It is important to note that this estimate is for the
flux error $\rho \nabla(u-u_h)$. The previous analysis in \cite{GSS15,MR2684351,MR3051411} used energy arguments to
establish error estimates for the energy error $\sqrt{\rho} \nabla(u-u_h)$, resulting in the dependence of $1\revblue{/} \rho^{-}$. Notice that a simple application of estimate \eqref{esrho} will give
\begin{equation*}
\|\rho^+ \nabla(u-u_h)^+\|_{L^2(\Omega^+)} \le \frac{\sqrt{\rho^+}}{\sqrt{\rho^-}} \, h \|f\|_{L^2(\Omega)},
\end{equation*}
Hence, we see that our result \eqref{erho} here is much sharper for this quantity.  The main ingredient of the analysis is the use of a
discrete extension result from $\Omega^+$ to all of $\Omega$. We note that this technique can be extended
to conforming finite element discretizations and it opens the possibility to establish sharper results
also for other discretizations as well (in particular the method studied in \cite{GSS15}).

The paper is organized as follows. In the next section we describe the Nitsche's finite element method. In Section \ref{section3} we provide an error estimate based on an energy argument. In Section \ref{section4} we improve the main result, obtaining an error estimate independent of the contrast for the diffusion coefficients for the flux. In the following section we discuss extension of the method and results for: interface problem with non homogeneous jumps, and the three dimensional problem. In Section \ref{sectionnumerical} we present numerical results that validate the theoretical results. We conclude with an appendix that contains proofs of some crucial lemmas.

\section{Finite element method}\label{section2}

\subsection{Preliminaries}
Let $\{\mathcal{T}_h\}_{h>0}$ be an admissible family of triangulations of $\Omega$. We adopt the convention that elements $T$ and element edges $e$ are open sets. We use over-line symbol to refer to their closure. For each triangular element $T\in \mathcal{T}_h$, let $h_T$ denotes its diameter and define the global parameter of the triangulation by $h = \max_{T} h_T$. We assume that $\mathcal{T}_h$ is shape regular, i.e. there exists $\kappa>0$ such that for every $T\in \mathcal{T}_h$  the radius $\rho_T$ of its inscribed circle satisfies
\begin{equation}\label{shaperegularity}
\rho_T>h_T/\kappa.
\end{equation}

The set of elements cutting the interface $\Gamma$, and restricted to $\Omega^+$ and $\Omega^-$ are also of interest. They are defined by:
\begin{align*}
\mathcal{T}_h^{\pm}    &:= \{T\in \mathcal{T}_h: T\cap \Omega^{\pm} \neq \emptyset\},\\
\mathcal{T}_h^{\Gamma} &:= \{T\in \mathcal{T}^{-}_h: \overline{T}\cap \Gamma \neq \emptyset\}.
\end{align*}
In particular for $T\in \mathcal{T}_h^{\Gamma}$ we denote $T_{\Gamma} =\overline{T}\cap \Gamma$. Observe that the definition of $\mathcal{T}_h^{\Gamma}$ guarantees that $\sum_{T\in\mathcal{T}_h^{\Gamma}}|T_\Gamma| = |\Gamma|$. Under these definitions we define the discrete domains
\begin{equation*}
\Omega_h^{\pm} := \mbox{Int} \Big(\bigcup_{T\in\mathcal{T}_h^{\pm}} \overline{T}\Big).
\end{equation*}
See Figure \ref{figOmegas} for an illustration of these definitions. The set of all the edges of $\mathcal{T}_h^\Gamma$ restricted to the interior of $\Omega_h^{+}$ and $\Omega_h^{-}$ is also considered
\begin{align*}
\mathcal{E}_h^{\Gamma,\pm} &:= \{ e =  \mbox{Int} (\partial T_1\cap \partial T_2): T_1, T_2\in \mathcal{T}_h^{\pm}, \mbox{ and } T_1\cap\Gamma\neq\emptyset \mbox{ or } T_2\cap\Gamma\neq\emptyset\}.
\end{align*}
\begin{figure}
\begin{center}
\includegraphics[scale=.07]{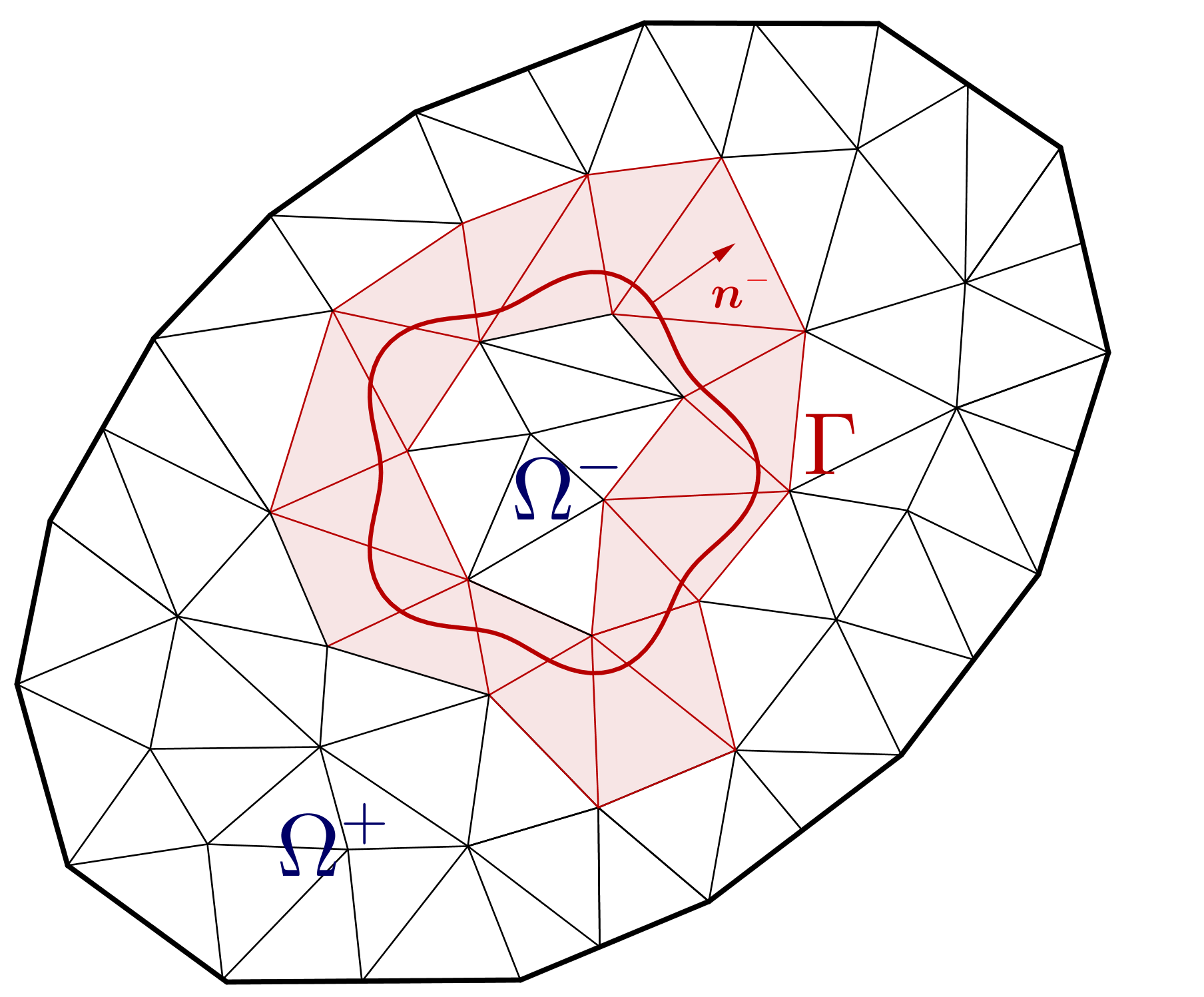}\includegraphics[scale=.07]{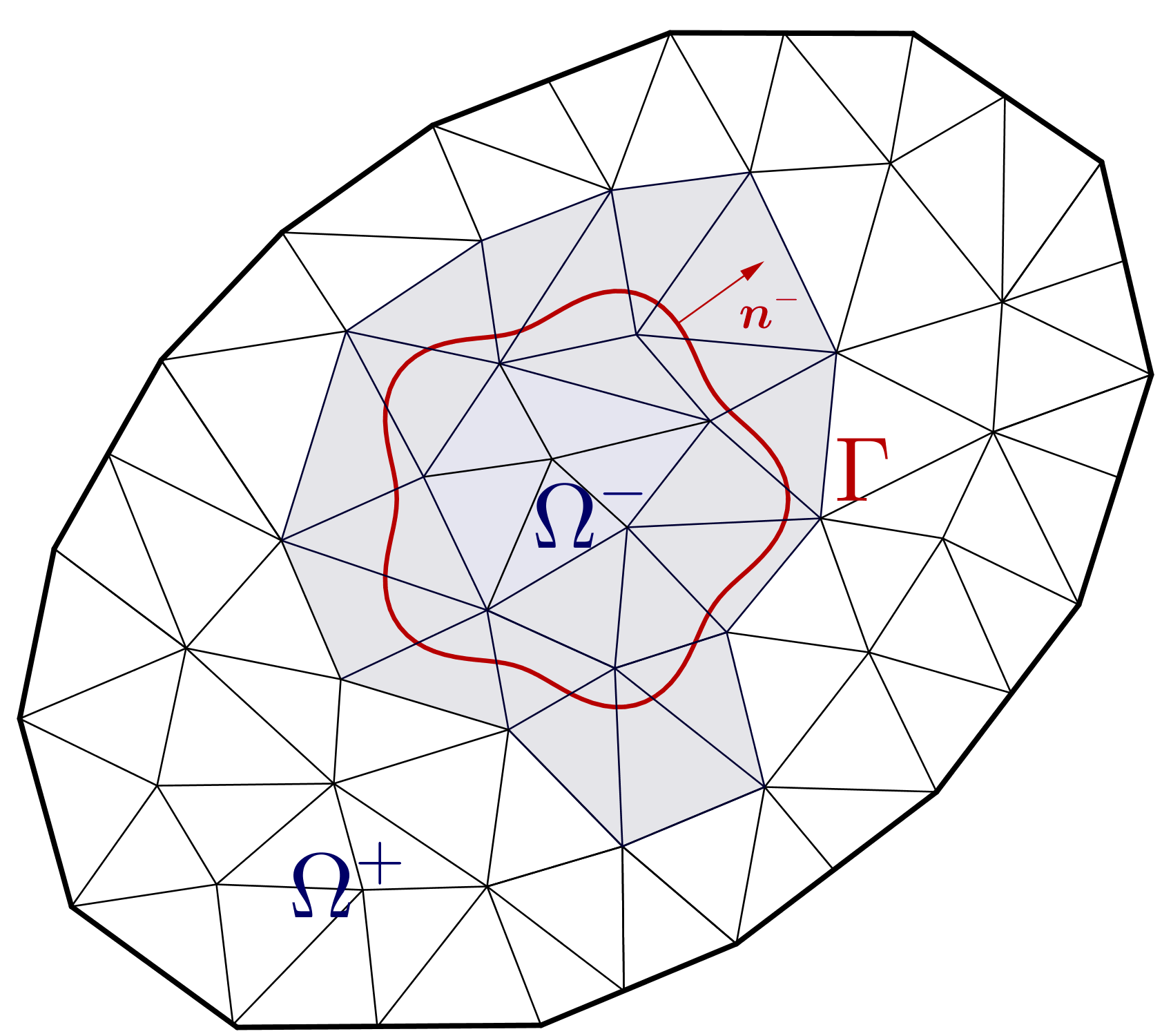}\includegraphics[scale=.07]{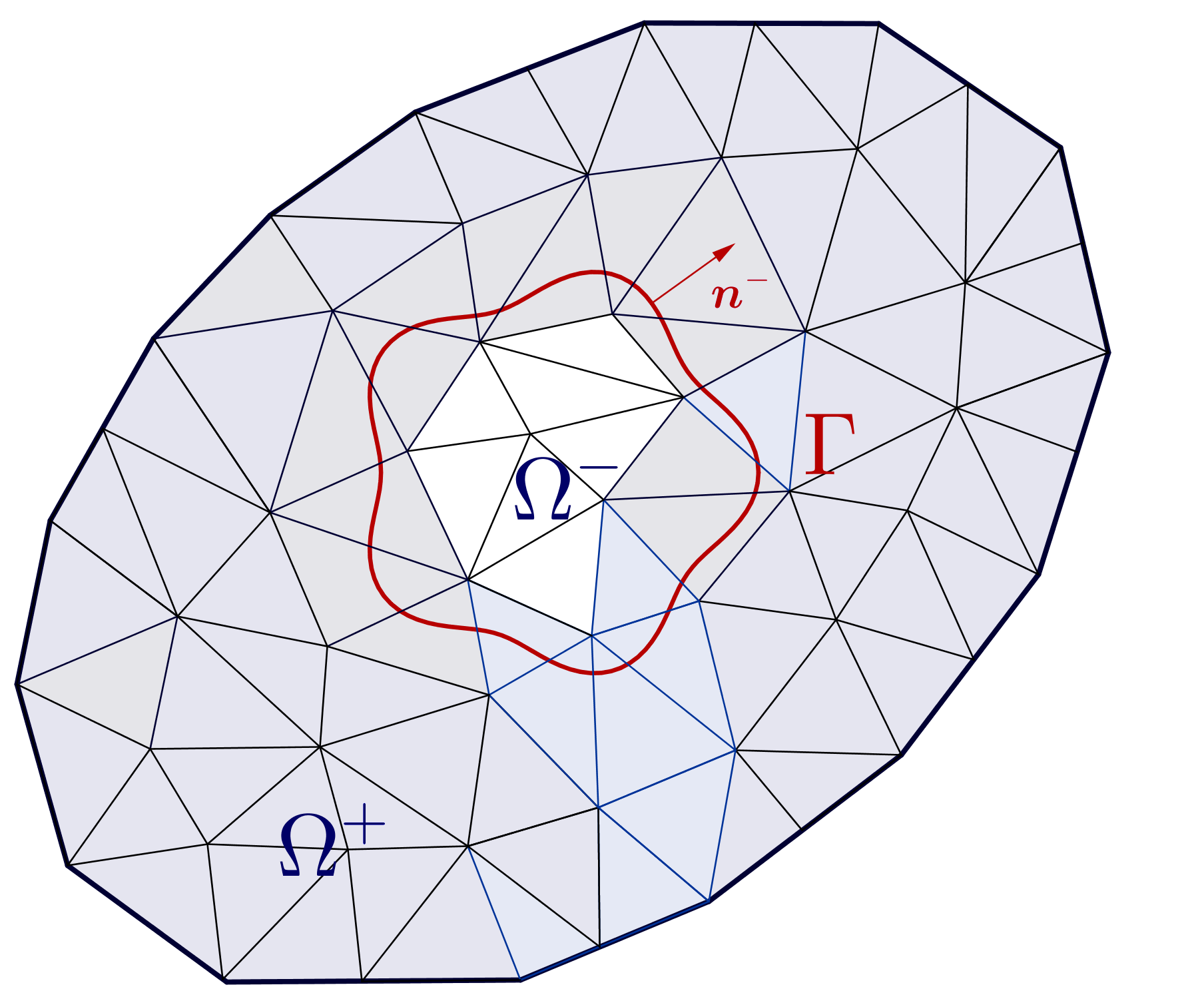}\\
\end{center}
\footnotesize
\ra{1.1}
\caption{Illustration of the definition of set $\mathcal{T}_h^{\Gamma}$ and discrete sub-domains $\Omega_h^{\pm}$. Left figure: elements in $\mathcal{T}_h^{\Gamma}$  (red transparent). Center figure: sub-domain $\Omega_h^{-}$ (blue transparent). Right figure: sub-domain $\Omega_h^{+}$ (blue transparent).}
\label{figOmegas}
\end{figure}

Standard finite element notation for the normal jumps is introduced as follows: for a piecewise smooth function $v$ with support in $\mathcal{T}_h$, the jump of its gradient across an interior edge $e=\mathrm{Int}(\partial T_1 \cap \partial T_2)$ is defined by
\begin{equation*}
\llbracket \nabla v \rrbracket= \nabla v|_{T_1} \cdot \n{n}_1+ \nabla v|_{T_2}\cdot  \n{n}_2,
\end{equation*}
where $\n{n}_1$ and $\n{n}_2$ are the unit normal vectors to $e$, pointing outwards to $T_1$ and $T_2$, respectively.
\subsection{Stabilized unfitted Nitsche method}
In this section we introduce a slightly simplified version of the Nitsche method for high-contrast interface problems by \cite{MR3051411}, Section 3.3.
We begin \revblue{by} denoting the standard finite element space of continuous piecewise linear polynomials with support in $\Omega_h^{\pm}$ by:
\begin{equation*}
V_h^{\pm} =\{ v \in \mathcal{C}(\Omega^{\pm}_h): v|_T \in \mathbb{P}^1(T), \forall T \in \mathcal{T}_h^{\pm}, \mbox{ and } v|_{\partial\Omega^{\pm}\backslash \Gamma } \equiv 0 \}.
\end{equation*}
The finite element space is defined by means of the restrictions of piecewise linear functions to $\Omega_h^+$ and $\Omega_h^-$, i.e.
\begin{equation*}
V_h= V_{h}^{-}\times V_h^{+}.
\end{equation*}
The jumps across the interface of a function $v\in V_h$ are defined as in \eqref{jumps}.

We now consider a finite element method based on: the weak formulation of problem \eqref{Problem}, penalty terms of the jump across the interface, and stabilization terms on edges in $\mathcal{E}_h^{\Gamma,\pm}$. Find $u_h=(u_h^{-},u_h^{+}) \in V_h$, such that:
\begin{equation}\label{fem}
a_h(u_h, v)=(f^{+},v^{+})_{\Omega^{+}}+(f^{-},v^{-})_{\Omega^{-}} , \quad \text{ for all } v \in V_h,
\end{equation}
where $(\cdot,\cdot)_{\Omega^{\pm}}$ denotes the $L^2$ product in $\Omega^{\pm}$ and $a_h(\cdot,\cdot)$ is a bilinear form defined by
\begin{alignat}{1}
&a_h(u_h, v)= \int_{\Omega^{+}} \rho^+ \nabla u_h^+ \cdot \nabla v^+dx  + \int_{\Omega^{-}} \rho^- \nabla u_h^- \cdot \nabla v^-dx  \label{a_h}\\
  & + \int_{\Gamma} \left(\rho^- \nabla v^- \cdot \n{n}^-  [u_h]+  \rho^- \nabla u_h^- \cdot \n{n}^-  [v]\right)ds + \sum_{T\in\ThG}\frac{\gamma}{h_T} \rho^-\int_{T_\Gamma} [u_h][v] ds\nonumber\\
& + \gamma_g^-\sum_{e \in \mathcal{E}_{h}^{\Gamma,-}}  |e| \int_{e} \rho^- \llbracket \nabla v^- \rrbracket\llbracket \nabla u_h^- \rrbracket \,ds +\gamma_g^+\sum_{e \in \mathcal{E}_{h}^{\Gamma,+}}  |e|\int_{e} \rho^+ \llbracket \nabla v^+ \rrbracket  \llbracket \nabla u_h^+ \rrbracket \,ds, \nonumber
\end{alignat}
where $\gamma, \gamma_g^-$, and $\gamma_g^+$ are positive parameters to be chosen\revblue{, and $|e|$ denotes the diameter of $e$, i.e. the size of the edge in two dimensions.} 
\revblue{ Note that although we consider the case where $\Gamma\cap \partial \Omega = \emptyset$, the method is well defined when the interface crosses the boundary of the domain. However, we only analyze the embedded case. If the interface crosses the boundary of the domain the solution will not necessarily be in $H^2(\Omega^+\cup \Omega^{-})$. In addition,  a technical tool that we utilized in the proof of our main result is the existence of a stable extension (see Lemma 4.1). It is not clear that this extension will exist in some cases where the interface crosses the boundary of the domain.}
 
\begin{remark} We point out that the terms in \eqref{a_h} involving
 integration on $\Gamma$ can be generalized to
\begin{equation*}
\int_{\Gamma} \left(
 \{\rho \nabla v\}_w\,  [u_h] + \{\rho \nabla u_h\}_w \,[v]\right) ds+\sum_{T\in\ThG}\frac{\gamma}{h_T} \tilde{\rho}\int_{T_\Gamma} [u_h][v] ds.
\end{equation*}
The weighted average
\begin{equation*}
\{\rho \nabla v \cdot  \n{n}^-\}_w := (w_{-} \,\rho^- \nabla v_{-} + w_{+} \,\rho^+ \nabla v_{+})  \cdot \n{n}^-\revblue{,}
\end{equation*}
where the weights $w_{-}(x) \in [0,1]$ and $w_{+}(x)= 1 - w_{-}(x)$ and
$\tilde{\rho}$ are chosen properly, see \cite{MR2837483}. The case
$w_{-}(x)=1$ and $\tilde{\rho}(x)= \rho^-(x)$ reduces to the one in (\ref{a_h}).
Another choice considered in the literature (see \cite{MR2002258,MR2257119, MR2491426}) is the harmonic average given by
$w_{-} = \frac{\rho^+}{\rho^+ + \rho^-}$ and $\tilde{\rho} =
\frac{2 \rho^+ \rho^-}{\rho^+ + \rho^-}$. In this case we obtain
\begin{equation*}
\{\rho \nabla v \cdot  \n{n}^-\}_w =  \frac{\tilde{\rho}}{2}\,
(\nabla v_{-} +\nabla v_{+})\cdot \n{n}^-.
\end{equation*}
In this paper we concentrate in the analysis of the choice \eqref{a_h},
however, since $\rho^-\leq \tilde{\rho} \leq 2 \rho^-$, the analysis for
the harmonic average case follows straightforwardly.
\end{remark}

Let us define the broken Sobolev spaces
\begin{equation*}
H_h^2(\Omega_h^\pm)\,\, =\,\,\{v \in H^1(\Omega_h^\pm): v|_{T^\pm} \in H^2(T^\pm), \text{ for all } T \in \mathcal{T}_h^{\pm}  \}.
\end{equation*}
The energy norm $\|\cdot\|_V$, induced by the bilinear form $a_h$, is defined for $v=(v^{-},v^{+}) \in H_h^2(\Omega_h^-) \times H_h^2(\Omega_h^+)$ by
\begin{alignat*}{1}
\|v\|_{V}^2 =&\|\sqrt{\rho} \nabla v\|_{L^2(\Omega)}^2+   \sum_{T\in\ThG}\frac{1}{h_T} \|\sqrt{\rho^-} [v]\|_{L^2(T_\Gamma)}^2 \\
&+\sum_{e \in \EhGm} |e| \,\|\sqrt{\rho^-} \llbracket \nabla v^- \rrbracket \|_{L^2(e)}^2+ \sum_{e \in \EhGp} |e|\, \|\sqrt{\rho^+} \llbracket \nabla v^+ \rrbracket \|_{L^2(e)}^2.
\end{alignat*}
Note that in this definition we use the following notation:
\begin{equation*}
\|\sqrt{\rho} \nabla v\|_{L^2(\Omega)}^2 = \|\sqrt{\rho^{-}} \nabla v^{-}\|_{L^2(\Omega^{-})}^2+\|\sqrt{\rho^{+}} \nabla v^{+}\|_{L^2(\Omega^{+})}^2.
\end{equation*}

\section{Standard a priori error analysis}\label{section3}

\subsection{Stability and best approximation results}
We will need the following technical proposition for the proof of coercivity; proof can be found in Appendix \ref{proofpropostiongeom}.
\begin{proposition}\label{propositiongeom}
Consider a node $z$ of the triangulation $\mathcal{T}_h$ such that $z\in\overline{\Omega}^{-}$. Let $\Delta_z$ be the patch of elements associated to $z$, i.e. $\Delta_z = \mathrm{Int}(\cup\{\overline{T}: T\in\mathcal{T}_h \mbox{ and } z\in \partial T\})$. Then for $h$ small enough, there exists an element $T_{z}\in\Delta_{z}$ such that:
\begin{equation}\label{boundclaim}
|T_{z}\cap\Omega^{-}| \geq C h_{T_{z}}^{2},
\end{equation}
where $C>0$ is a constant independent of $h_{T_{z}}$.
\end{proposition}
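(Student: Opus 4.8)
The plan is to reduce \eqref{boundclaim} to a single area estimate on the whole patch and then extract one good element by a pigeonhole argument. First I would record the two consequences of shape regularity \eqref{shaperegularity} that drive everything: the number of elements meeting $z$ is bounded by a constant $N_0=N_0(\kappa)$, and any two elements of $\Delta_z$ have comparable diameters, so that all $T\in\Delta_z$ share a common patch scale $h_z$ with $c^{-1}h_z\le h_T\le c\,h_z$ for a constant depending only on $\kappa$ (this follows because the shortest edge of a shape-regular triangle is bounded below by $c_\kappa h_T$ while its longest edge equals $h_T$, and adjacent triangles share an edge). Because of this comparability, it suffices to find a single element $T\in\Delta_z$ with $|T\cap\Om|\ge C h_z^2$, which then yields \eqref{boundclaim} after replacing $h_z$ by $h_{T_z}$. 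Moreover, distributing the total $\Om$-mass of the patch over its at most $N_0$ elements, it is in fact enough to prove the patch estimate
\begin{equation*}
|\Om\cap\Delta_z|\ge C h_z^2 ,
\end{equation*}
since some element must then carry at least a $1/N_0$ fraction of it.

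To prove this patch estimate I would split on $d:=\mathrm{dist}(z,\Gamma)$. In the \emph{far} regime, when $d$ exceeds the diameter of $\Delta_z$ (which is $\lesssim h_z$), the patch does not meet $\Gamma$; being star-shaped at $z$ it is connected, and since it contains $z\in\Omclosed$ it must lie entirely in $\Omclosed$. Hence every $T\in\Delta_z$ satisfies $|T\cap\Om|=|T|\ge C_\kappa h_T^2$, and any element serves as $T_z$. This case also disposes of vertices $z$ lying on $\partial\Omega$: since $\Gamma$ is embedded it stays at a fixed positive distance from $\partial\Omega$, so for $h$ small every boundary vertex is automatically in the far regime. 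In the \emph{near} regime, $d\lesssim h_z$ forces $z$ to be interior to $\Omega$ (again using that $\Gamma$ is embedded and $h$ is small), so the full patch is available; using that the distance from $z$ to the edge of each incident triangle opposite $z$ is $\gtrsim |T|/h_T\gtrsim h_z$, I obtain a disk $B(z,c_0 h_z)\subset\Delta_z$ with $c_0=c_0(\kappa)$, and it remains to bound $|\Om\cap B(z,c_0 h_z)|$ from below.

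The core of the argument is this last disk estimate. If $d\ge c_0 h_z$ the disk does not meet $\Gamma$ and lies in $\Omclosed$, giving area $\pi(c_0 h_z)^2$ at once. Otherwise I would exploit the smoothness of $\Gamma$: let $p\in\Gamma$ be nearest to $z$, let $L$ be the tangent line to $\Gamma$ at $p$, and let $H^-$ be the closed half-plane on the $\Om$ side of $L$. Since the segment $zp$ is normal to $\Gamma$ and $z\in\Omclosed$, we have $z\in H^-$, so the half-disk $H^-\cap B(z,c_0 h_z)$ has area at least $\tfrac12\pi(c_0 h_z)^2$. Writing $\Gamma$ locally as a graph over $L$ and using the uniform curvature bound of the $C^2$ curve $\Gamma$, the interface deviates from $L$ by at most $C_\Gamma (c_0 h_z)^2$ throughout the disk, so the symmetric difference between $\Om\cap B(z,c_0 h_z)$ and $H^-\cap B(z,c_0 h_z)$ is a sliver of area at most $C_\Gamma (c_0 h_z)^3$. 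Therefore
\begin{equation*}
|\Om\cap B(z,c_0 h_z)|\ \ge\ \tfrac12\pi(c_0 h_z)^2-C_\Gamma (c_0 h_z)^3\ \ge\ \tfrac14\pi(c_0 h_z)^2
\end{equation*}
provided $h$ is small enough, which is exactly the ``$h$ small enough'' hypothesis; this completes the patch estimate and hence the proposition.

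I expect the genuine obstacle to be this geometric sliver estimate near $\Gamma$: one must quantify how far the curved interface departs from its tangent at the patch scale and verify that the resulting cubic term is truly absorbed by the quadratic main term for all sufficiently small $h$, uniformly in the location of $z$ along $\Gamma$ (which is where the compactness and smoothness of $\Gamma$ enter, through a global curvature bound). By contrast, the bookkeeping items — that $h_z\approx h_{T_z}$, that the patch has boundedly many elements, and that it contains a disk of radius $\sim h_z$ — are all routine consequences of shape regularity and require only careful constant-tracking.
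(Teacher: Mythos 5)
Your proof is correct, but it takes a genuinely different route from the paper's. The paper argues combinatorially at the mesh level: for $h$ small the smooth interface crosses each edge at most once and cannot encircle $z$ inside its patch, so there is an edge $e$ joining $z$ to a neighbouring node $z'$ with $\overline{e}\subset\overline{\Omega}^{-}$; if two such neighbours exist a whole triangle of the patch lies in $\overline{\Omega}^{-}$ and one is done, and otherwise the segment is prolonged until it meets $\Gamma$ at a point $z_\Gamma$, the prolonged segment $e_\Gamma$ splits $\Delta_z\cap\Omega^{-}$ into two pieces $A_1,A_2$, and the quantitative content is delegated to Lemma 4 of \cite{GSS15}, which gives $|e_\Gamma|^2\le C\max\{|A_1|,|A_2|\}$; the conclusion then follows from $|e|\le|e_\Gamma|$, shape regularity, and the bounded cardinality of the patch. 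You instead prove the patch estimate $|\Delta_z\cap\Omega^{-}|\ge C h_z^2$ directly by an analytic argument: an inscribed disk $B(z,c_0h_z)\subset\Delta_z$, the half-plane bounded by the tangent to $\Gamma$ at the foot point of $z$, and a curvature-controlled sliver estimate. Your version is self-contained (it does not import the external lemma) and makes explicit exactly where the smoothness of $\Gamma$ and the hypothesis ``$h$ small enough'' enter; the paper's version is shorter and stays entirely at the level of mesh entities. One detail you should spell out in the sliver step: to identify $\Omega^{-}\cap B(z,c_0h_z)$ with the subgraph region of a single local parametrization of $\Gamma$ over the tangent line, a global curvature bound is not quite enough --- you also need that $\Gamma\cap B(z,c_0h_z)$ is a single arc, i.e. the positive reach (uniform tubular neighbourhood) of the compact embedded smooth curve, so that no distant portion of $\Gamma$ re-enters the disk. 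This is the same smallness assumption the paper invokes when asserting that $\Gamma$ meets each edge at most once, so it is a presentational point rather than a gap.
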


Coercivity of the bilinear form $a_h$ is proved below.
\begin{lemma}\label{lemma:coercive}
There exists a constant $c>0$ such that
\begin{equation}\label{coercive}
c\|v\|_V^2 \le  a_h(v,v), \quad \text{ for all } v\in V_h.
\end{equation}
\end{lemma}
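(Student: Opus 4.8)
The plan is to test the form against a single argument $v=(v^-,v^+)\in V_h$, expand $a_h(v,v)$, and control the one indefinite contribution. Writing out the diagonal, all bulk and stabilization terms are nonnegative and reproduce, term by term, the squares appearing in $\|v\|_V^2$:
\begin{align*}
a_h(v,v) &= \|\sqrt{\rho^+}\nabla v^+\|_{L^2(\Omega^+)}^2 + \|\sqrt{\rho^-}\nabla v^-\|_{L^2(\Omega^-)}^2 + 2\int_{\Gamma} \rho^- \nabla v^- \cdot \n{n}^-\,[v]\,ds \\
&\quad + \gamma \sum_{T\in\ThG}\frac{\rho^-}{h_T}\|[v]\|_{L^2(T_\Gamma)}^2 + \gamma_g^-\sum_{e\in\EhGm} |e|\,\rho^-\|\llbracket\nabla v^-\rrbracket\|_{L^2(e)}^2 + \gamma_g^+\sum_{e\in\EhGp} |e|\,\rho^+\|\llbracket\nabla v^+\rrbracket\|_{L^2(e)}^2.
\end{align*}
The only term that can be negative is the Nitsche consistency term $2\int_{\Gamma} \rho^- \nabla v^- \cdot \n{n}^-[v]\,ds$, and, crucially, it involves $\rho^-$ alone; this is why the resulting estimate will be contrast independent.

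The heart of the proof is an inverse (trace) estimate bounding the flux of $v^-$ on the cut part of $\Gamma$ by the bulk gradient energy together with the ghost-penalty seminorm on the minus side,
\begin{equation*}
\sum_{T\in\ThG} h_T\,\|\nabla v^- \cdot \n{n}^-\|_{L^2(T_\Gamma)}^2 \le C_1 \Big(\|\nabla v^-\|_{L^2(\Omega^-)}^2 + \sum_{e\in\EhGm}|e|\,\|\llbracket\nabla v^-\rrbracket\|_{L^2(e)}^2\Big),
\end{equation*}
with $C_1$ independent of how $\Gamma$ cuts the mesh. I expect this to be the main obstacle: for a badly cut element the naive bound $h_T\|\nabla v^-\cdot\n{n}^-\|_{L^2(T_\Gamma)}^2 \le C\|\nabla v^-\|_{L^2(T\cap\Omega^-)}^2$ fails because $|T\cap\Omega^-|$ may be arbitrarily small. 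This is exactly where Proposition \ref{propositiongeom} and the edges $\EhGm$ enter. Since $v^-$ is piecewise linear, $\nabla v^-$ is piecewise constant, and because the tangential derivative is continuous across interior edges, the difference of the constant gradients on two neighbours equals $|\llbracket\nabla v^-\rrbracket|$; hence $|e|\|\llbracket\nabla v^-\rrbracket\|_{L^2(e)}^2=|e|^2|\nabla v^-|_{T_1}-\nabla v^-|_{T_2}|^2$ measures precisely the element-to-element variation of $\nabla v^-$. Given $T\in\ThG$, Proposition \ref{propositiongeom} supplies, in a patch about one of its nodes, a fat element $T_z$ with $|T_z\cap\Omega^-|\ge C h_{T_z}^2$; I would join $T$ to $T_z$ through a chain of edges of $\EhGm$ of bounded length (by shape regularity) and telescope $\nabla v^-|_T = \nabla v^-|_{T_z} + \sum_i(\nabla v^-|_{T_i}-\nabla v^-|_{T_{i+1}})$. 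On $T_z$ the standard inverse estimate applies, while each increment is controlled by a ghost-penalty jump; summing over $T$ with local finite overlap yields the displayed bound (naturally isolated as a lemma, with its geometric part deferred to the appendix).

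With this estimate in hand I would close the argument by Cauchy–Schwarz and Young. Splitting each factor elementwise,
\begin{equation*}
\Big|2\int_{\Gamma} \rho^- \nabla v^-\cdot\n{n}^-\,[v]\,ds\Big| \le 2\Big(\sum_{T\in\ThG}\rho^- h_T\|\nabla v^-\cdot\n{n}^-\|_{L^2(T_\Gamma)}^2\Big)^{1/2}\Big(\sum_{T\in\ThG}\frac{\rho^-}{h_T}\|[v]\|_{L^2(T_\Gamma)}^2\Big)^{1/2},
\end{equation*}
where the second factor is the square root of the penalty term and, since $\rho^-$ is constant it factors out of the inverse estimate, the first factor is at most $C_1^{1/2}\big(\|\sqrt{\rho^-}\nabla v^-\|_{L^2(\Omega^-)}^2+\sum_{e\in\EhGm}|e|\rho^-\|\llbracket\nabla v^-\rrbracket\|_{L^2(e)}^2\big)^{1/2}$. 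Applying $2ab\le\delta a^2+\delta^{-1}b^2$ subtracts a $\delta$-multiple of the minus-side gradient energy and of the minus-side ghost-penalty term, and a $C_1/\delta$-multiple of the interface penalty term. Taking $\delta=1/2$, then choosing $\gamma$ large enough that $\gamma-2C_1>0$ and $\gamma_g^->1/2$, leaves a strictly positive coefficient in front of every term of $\|v\|_V^2$ (the plus-side ghost penalty is untouched, so any $\gamma_g^+>0$ works); setting $c$ to the minimum of these coefficients gives $c\|v\|_V^2\le a_h(v,v)$, with $c$ independent of $\rho^\pm$.
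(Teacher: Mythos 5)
Your proposal is correct and follows essentially the same route as the paper: expand $a_h(v,v)$, isolate the Nitsche consistency term, and control $\sum_{T}h_T\rho^-\|\nabla v^-\cdot \n{n}^-\|_{L^2(T_\Gamma)}^2$ by chaining through edges of $\mathcal{E}_h^{\Gamma,-}$ around a node of $T$ until a fat element $T_z$ from Proposition \ref{propositiongeom} is reached, then telescoping the piecewise-constant gradients (the paper telescopes the edge norms, which is the same idea) and concluding with Young's inequality and $\gamma$ large. The only blemish is the bookkeeping at the end (``$\delta=1/2$'' should be $\delta$ of order $1/C_1$ to match your stated conditions $\gamma>2C_1$, $\gamma_g^->1/2$), which is immaterial.
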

\begin{proof}
Let $v \in V_h$. Observe that the bilinear form $a_h$ is symmetric, then it follows
\begin{alignat*}{1}
a_h(v, v)=& \|\sqrt{\rho} \nabla v\|_{L^2(\Omega)}^2 + 2\int_{\Gamma} \rho^- \nabla v^- \cdot \n{n}^-  [v]ds
+  \sum_{T\in\ThG}\frac{\gamma}{h_T}\|\sqrt{\rho^-}[v]\|_{L^2(T_\Gamma)}^2 \\
& + \gamma_g^-\sum_{e \in \EhGm} |e| \rho^- \|\llbracket \nabla v^- \rrbracket \|_{L^2(e^-)}^2+ \gamma_g^+\sum_{e \in \EhGp} |e| \rho^+ \|\llbracket \nabla v^+ \rrbracket \|_{L^2(e)}^2).
\end{alignat*}

In order to prove \eqref{coercive} it is enough to bound  the non positive term (second term). Let $T\in \ThG$. Applying Cauchy-Schwarz inequality we obtain
\begin{equation*}
\left|\int_{\TG} \rho^- \nabla v^- \cdot \n{n}^-  [v] \, ds\right|\le \left( \sqrt{\rho^-h_T}\| \nabla v^- \cdot \n{n}^-\|_{L^2(\TG)} \right)\left(\sqrt{\frac{\rho^-}{h_T}}\| [v]\|_{L^2(\TG)}\right).
\end{equation*}
Summing over $T\in \mathcal{T}_h^{\Gamma}$ and applying arithmetic-geometric inequality give
\begin{equation*}
\left|\int_{\Gamma} \rho^- \nabla v^- \cdot \n{n}^-  [v]ds\right| \le \sum_{T \in \mathcal{T}_h^\Gamma} \left( \varepsilon \rho^-h_T \| \nabla v^- \cdot \n{n}^-\|_{L^2(\TG)} ^2+\frac{\rho^-}{\varepsilon h_T} \|[v]\|_{L^2(\TG)}^2\right).
 \end{equation*}

Let $T \in \mathcal{T}_h^\Gamma$ and let $z\in\overline{\Omega}^{-}$ be a node of $T$. By Proposition \ref{propositiongeom}, there exists a triangle $T_{z}$ satisfying \eqref{boundclaim}. Now, consider the shortest sequence of edges $E(T)=\{e_1,e_2,...,e_N\}$ such that
\begin{equation*}
\left\{
  \begin{array}{ll}
    z\in \overline{e_j}, & j=1,...,N, \\
    e_1\subset \partial T\mbox{ and } e_{N}\subset\partial T_{z},&\\
    e_{j}, e_{j+1} \subset \partial T_j, \,T_j \in\mathcal{T}_h, & j=1,...,N-1.
  \end{array}
\right.
\end{equation*}
Note that by its definition $E(T)\subset \EhGm$. Then, observing that the tangential jump of $\nabla v^-$ is zero along edges, we have
\begin{align*}
h_T \rho^-\| \nabla v^- \cdot \n{n}^-\|_{L^2(\TG)}^2 & \le  h_T\rho^-\frac{|T_{\Gamma}|}{|e_1|} \| \nabla v^-\|^2_{L^2(e_1)} \le \revblue{\kappa^{2}} \rho^- |e_1|\,  \| \nabla v^-\|^2_{L^2(e_1)}\\
                                                     & \le \revblue{\kappa^{2}} \rho^- |e_1| \left( \| \llbracket \nabla v^-\rrbracket\|^2_{L^2(e_1)} + \| \nabla v^-|_{T_1}\|_{L^2(e_1)}^2\right)\\
&\le \revblue{\kappa^{2}} \rho^- |e_1| \,\| \llbracket \nabla v^-\rrbracket\|^2_{L^2(e_1)} + \revblue{\kappa^{4}} \rho^- |e_2|\,\| \nabla v^-|_{T_1}\|_{L^2(e_2)} \\
& \vdots\\
& \leq c(\kappa) \rho^{-}\sum_{e\in E(T)}|e| \,\|\llbracket \nabla v^-\rrbracket\|^2_{L^2(e)} + C(\kappa) \rho^{-} |e_N|\, \| \nabla v^-|_{T_{z}}\|_{L^2(e_N)} \\
& \leq  c(\kappa) \rho^{-}\sum_{e\in E(T)}|e| \,\|\llbracket \nabla v^-\rrbracket\|^2_{L^2(e)}  + \tilde{C}(\kappa) \|\sqrt{\rho^{-}} \nabla v^-\|^2_{L^2(T_{z})},
\end{align*}
where $\kappa$ is the shape regularity constant defined in \eqref{shaperegularity}.\revblue{ Observe that by property \eqref{boundclaim} in Proposition \ref{propositiongeom} and since the test function is piecewise linear, we can estimate the last term above as
\[
\|\sqrt{\rho^{-}} \nabla v^-\|^2_{L^2(T_{z})} \leq C \|\sqrt{\rho^{-}} \nabla v^-\|^2_{L^2(T_{z}\cap \Omega^{-})}.
\]}
\begin{figure}
\begin{center}
\includegraphics[scale=.38]{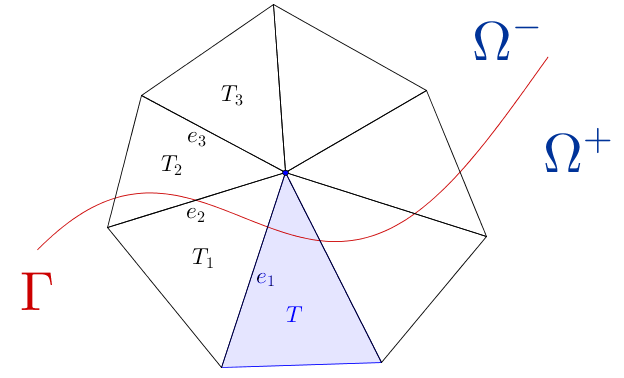}\\
\end{center}
\caption{Illustration of set E(T).}
\label{plotbasis}
\end{figure}
Hence, considering that each nodal patch contains finite number of elements implies that, after summing over $T\in \mathcal{T}_{h}^{\Gamma}$, the terms above are repeated at most finitely many times. Then, it follows
\begin{equation*}
\sum_{T \in \mathcal{T}_h^\Gamma} \rho^-h_T \| \nabla v^- \cdot \n{n}^-\|_{L^2(\TG)} ^2 \le \tilde{C}(\kappa)\left( \|\sqrt{\rho^-} \nabla v\|^{2}_{L^2(\Omega^-)}+\sum_{e \in \EhGm} |e| \rho^- \|\llbracket \nabla v^- \rrbracket \|_{L^2(e)}^2\right).
\end{equation*}
Therefore, coercivity follows by choosing $\gamma$ large enough in terms of $C(\kappa)$  and $\varepsilon$.
\end{proof}

With the aim of proving continuity of the bilinear form $a_h$, we define the following augmented norm:
\begin{equation} \label{augmentednorm}
\|v\|_{V_{A}}^2\,\,=\,\,\|v\|_V^2+  \sum_{T\in\mathcal{T}_h^{\Gamma}}h_T \| \sqrt{\rho^-} \nabla v^- \cdot \n{n}^-\|_{L^2(T_\Gamma)}^2.
\end{equation}

Continuity of the bilinear form follows from its definition and Cauchy-Schwarz inequality. The result is stated as follows:
\begin{lemma} (Continuity) \label{lemma:continuity}
Suppose that $v^\pm,w^\pm \in H_h^2(\Omega^\pm)$. Then, there exists a constant $C>0$, independent of $v$ and $w$, such that
\begin{equation*}
a_h(w,v)\,\, \le\,\,  C \, \|w\|_{V_{A}}  \,  \|v\|_{V_{A}}.
\end{equation*}
Additionally, if $w^\pm \in H_h^2(\Omega^\pm)$ and $v \in V_h$ we have
\begin{equation}\label{continuity}
a_h(w,v) \,\,\le\,\,  C \, \|w\|_{V_{A}}  \,  \|v\|_V.
\end{equation}
\end{lemma}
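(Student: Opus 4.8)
The plan is to bound $a_h(w,v)$ term by term with the Cauchy--Schwarz inequality, keeping track of exactly which terms force the use of the augmented norm $\|\cdot\|_{V_A}$ in place of $\|\cdot\|_V$. Write $a_h(w,v)$ as the sum of its six contributions: the two bulk terms over $\Op$ and $\Om$, the two interface consistency integrals over $\Gamma$, the interface penalty term, and the two edge stabilization terms over $\EhGm$ and $\EhGp$. For the bulk terms a direct $L^2$ Cauchy--Schwarz gives, e.g., $\int_{\Op}\rho^+\nabla w^+\cdot\nabla v^+\,dx \le \|\sqrt{\rho^+}\nabla w^+\|_{L^2(\Op)}\,\|\sqrt{\rho^+}\nabla v^+\|_{L^2(\Op)}$, which is controlled by $\|w\|_V\|v\|_V$ and hence by $\|w\|_{V_A}\|v\|_{V_A}$. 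The penalty term and the two stabilization terms are handled identically: on each $\TG$ (resp.\ each edge $e$) apply Cauchy--Schwarz, then a discrete Cauchy--Schwarz over $T\in\ThG$ (resp.\ $e\in\mathcal{E}_h^{\Gamma,\pm}$). Each of these reproduces exactly the corresponding squared quantity appearing in $\|w\|_V$ and $\|v\|_V$, so they are bounded by $C\|w\|_V\|v\|_V$ with $C$ depending on $\gamma$ and $\gamma_g^{\pm}$.

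The only terms that genuinely require the augmented norm are the two consistency integrals over $\Gamma$. Decomposing $\int_\Gamma=\sum_{T\in\ThG}\int_{\TG}$ and inserting the factor $h_T^{1/2}h_T^{-1/2}$ on each $\TG$ gives
\[
\Big|\int_{\TG}\rho^-\nabla v^-\cdot\n{n}^-\,[w]\,ds\Big| \le \Big(h_T^{1/2}\|\sqrt{\rho^-}\nabla v^-\cdot\n{n}^-\|_{L^2(\TG)}\Big)\Big(h_T^{-1/2}\|\sqrt{\rho^-}[w]\|_{L^2(\TG)}\Big).
\]
Summing over $T$ and applying the discrete Cauchy--Schwarz inequality, the first factor assembles into $\big(\sum_{T\in\ThG} h_T\|\sqrt{\rho^-}\nabla v^-\cdot\n{n}^-\|_{L^2(\TG)}^2\big)^{1/2}$, which is precisely the extra term in $\|v\|_{V_A}$, while the second factor assembles into the penalty contribution to $\|w\|_V$. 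This bounds the term by $\|v\|_{V_A}\|w\|_V$; the symmetric consistency term with the roles of $w$ and $v$ interchanged is bounded by $\|w\|_{V_A}\|v\|_V$. Collecting all six estimates yields the first inequality $a_h(w,v)\le C\|w\|_{V_A}\|v\|_{V_A}$.

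The sharper second inequality, valid when $v\in V_h$, follows from the first once we show $\|v\|_{V_A}\le C\|v\|_V$ for piecewise linear $v$. This is exactly the estimate established inside the proof of Lemma \ref{lemma:coercive}: there it is shown that $\sum_{T\in\ThG}\rho^-h_T\|\nabla v^-\cdot\n{n}^-\|_{L^2(\TG)}^2$ is bounded by $\tilde C(\kappa)\big(\|\sqrt{\rho^-}\nabla v^-\|_{L^2(\Om)}^2 + \sum_{e\in\EhGm}|e|\rho^-\|\llbracket\nabla v^-\rrbracket\|_{L^2(e)}^2\big)$, and the right-hand side is $\le \tilde C(\kappa)\|v\|_V^2$. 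Since the augmented norm adds only this quantity to $\|v\|_V^2$, we obtain $\|v\|_{V_A}^2\le (1+\tilde C(\kappa))\,\|v\|_V^2$, and the second inequality follows immediately from the first.

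I expect the main conceptual point to be recognizing why the augmented norm is unavoidable in the general $H_h^2$ estimate: the energy norm $\|\cdot\|_V$ does not control the normal trace $\nabla v^-\cdot\n{n}^-$ of a generic broken $H^2$ function on $\Gamma$, so the consistency terms cannot be closed within $\|\cdot\|_V\times\|\cdot\|_V$ and must borrow the extra $\TG$-weighted term. The technical heart, however, lies in the reduction to the second inequality, where returning from the augmented norm to the energy norm on $V_h$ rests on the small-cut geometry of Proposition \ref{propositiongeom} together with the edge-chaining argument transferring the normal gradient on $\TG$ onto a well-cut element $T_z$ and onto the ghost-penalty jumps --- precisely the mechanism that keeps every constant independent of the contrast $\rho^+/\rho^-$.
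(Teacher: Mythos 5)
Your proposal is correct and takes essentially the same approach the paper intends: the paper offers no details beyond ``Continuity of the bilinear form follows from its definition and Cauchy--Schwarz inequality,'' and your term-by-term Cauchy--Schwarz, with the two interface consistency integrals paired against the extra $T_\Gamma$-weighted term of $\|\cdot\|_{V_A}$, is the standard way to fill them in. Your reduction of the second inequality to the bound $\|v\|_{V_A}\le C\|v\|_V$ for $v\in V_h$, borrowed from the chaining estimate inside the coercivity proof (Lemma \ref{lemma:coercive}), is exactly the mechanism the paper relies on there, so nothing further is needed.
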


In order to discuss the Galerkin orthogonality of method \eqref{fem}-\eqref{a_h} we need to define extension operators.
Consider the following well known extension result (see \cite{MR1814364}).
\begin{lemma}\label{Continuousextension}
Assume that $u^{\pm}:=u|_{\Omega^\pm} \in H^{2}(\Omega^{\pm})$. Then, there exist extensions $u_E^{\pm}\in H^{2}(\Omega)$, such that $u_E^{\pm}|_{\Omega^{\pm}} = u^{\pm}$ and
\begin{equation*}
\|u_{E}^{\pm}\|_{H^{i}(\Omega)}\leq C \|u^{\pm}\|_{H^{i}(\Omega^{\pm})}, \quad \text{ for } i=0,1,2,
\end{equation*}
for a constant $C>0$ depending only on $\Omega^{\pm}$.
\end{lemma}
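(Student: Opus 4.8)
The plan is to read Lemma~\ref{Continuousextension} as the classical Sobolev extension theorem and to prove it by localization, boundary flattening, and reflection. The first observation that simplifies matters is geometric: since we only require an extension to $\Omega$ (not to all of $\mathbb{R}^2$) and $\Gamma\cap\partial\Omega=\emptyset$ in the embedded case, the single internal interface along which the two subdomains meet inside $\Omega$ is $\Gamma$, which is smooth. Concretely, to build $u_E^-$ we must only fill the subdomain complementary to $\Omega^{-}$ by extending $u^-$ across $\Gamma$, and likewise for $u_E^+$; the polygonal outer boundary $\partial\Omega$ is never crossed and plays no role in the gluing. Thus the corners of $\partial\Omega$, which would otherwise force a Lipschitz-domain construction, cause no difficulty, and a reflection across the smooth curve $\Gamma$ will suffice in both cases.

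The technical core is the half-space model. After flattening a piece of $\Gamma$ to $\{x_2=0\}$ by a smooth chart, I would extend a function given on $\{x_2>0\}$ to $\{x_2<0\}$ by the higher-order reflection
\[
  Eu(x_1,x_2)=\sum_{j=1}^{2}c_j\,u(x_1,-\lambda_j x_2),\qquad x_2<0,
\]
with distinct $\lambda_j>0$ and with $c_1,c_2$ determined by the $2\times 2$ Vandermonde system $\sum_j c_j(-\lambda_j)^\ell=1$ for $\ell=0,1$. These two conditions force the trace and the first normal derivative of $Eu$ from below to agree with those of $u$ from above, so that the glued function lies in $H^2$ with no singular contribution to its second derivatives along $\{x_2=0\}$; a plain even reflection would only yield $H^1$. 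Because $Eu$ is a fixed finite combination of compositions of $u$ with invertible linear maps, a single change of variables shows that the same operator is simultaneously bounded $L^2\to L^2$, $H^1\to H^1$ and $H^2\to H^2$, with constants depending only on $c_j,\lambda_j$; this is exactly what is needed to obtain the stated bound for all $i=0,1,2$ from one construction.

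To globalize, I would cover $\Gamma$ by finitely many smooth coordinate patches. On each patch a smooth diffeomorphism straightens $\Gamma$ to a segment of $\{x_2=0\}$, and pulling the half-space reflection back through the chart yields, by the chain rule, a bounded $H^i$ extension into a collar of $\Gamma$ on the far side, with constants controlled by finitely many derivatives of the chart and hence by $\Omega^{\pm}$ alone. A subordinate partition of unity patches these local reflections into a single function defined on $\Omega^{\pm}$ together with a collar neighborhood of $\Gamma$; multiplying by a cutoff $\psi$ that equals $1$ near $\Gamma$ and vanishes in the deep interior of the complementary subdomain then extends the definition to all of $\Omega$ while preserving the $H^2$ matching across $\Gamma$ (where $\psi\equiv 1$) and introducing no new interface. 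Setting $u_E^{\pm}$ equal to $u^{\pm}$ on $\Omega^{\pm}$ and to this cutoff reflection elsewhere gives the desired extension, and linearity together with the finiteness of the cover yields $\|u_E^{\pm}\|_{H^i(\Omega)}\le C\|u^{\pm}\|_{H^i(\Omega^{\pm})}$.

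The only genuinely delicate point is the simultaneous $H^2$ boundedness: an extension operator bounded on $H^1$ is immediate, but preserving $H^2$ forces the first-derivative matching that dictates the higher-order reflection above, and one must verify that this choice does not spoil the $L^2$ and $H^1$ bounds. Everything else is bookkeeping. Since $u^{\pm}$ is only assumed to lie in $H^2(\Omega^{\pm})$ and the statement asks merely for \emph{some} bounded extension, the most economical route in practice is to quote the Stein/Calder\'on extension theorem recorded in \cite{MR1814364}, of which the reflection construction above is the explicit realization adapted to the present smooth interface.
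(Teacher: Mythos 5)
Your proof is correct, but it is genuinely more than what the paper does for this lemma: the paper simply cites the classical extension theorem (\cite{MR1814364}) and, in Appendix~\ref{proofofevenextension}, constructs only the \emph{even} reflection across $\Gamma$ via the tubular neighborhood $R_\epsilon$ and the nearest-point map $x\mapsto \widetilde{x}$, which it explicitly labels ``a particular case'' and for which it records only the $H^1$ bound \eqref{evenExtension} (that explicit construction is what is actually reused later, in the proof of the discrete extension Lemma~\ref{Discreteextensionlmm}). You instead supply the full argument the citation stands in for: flattening $\Gamma$, the two-term Hestenes-type reflection $\sum_j c_j u(x_1,-\lambda_j x_2)$ with the Vandermonde conditions $\sum_j c_j(-\lambda_j)^\ell=1$, $\ell=0,1$, matching the trace and first normal derivative so the glued function is in $H^2$, plus partition of unity and cutoff. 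Your geometric observation is also the right one and matches the paper's setting: since the extension only needs to reach $\Omega$ and $\Gamma\cap\partial\Omega=\emptyset$, only the smooth curve $\Gamma$ is ever crossed, so the corners of the polygonal $\partial\Omega$ are irrelevant and no Lipschitz-domain (Stein/Calder\'on) machinery is required. You correctly identify the one point where the paper's appendix construction would not suffice for the lemma as stated, namely that an even reflection preserves only $H^1$ while the $i=2$ bound forces the first-derivative matching; the only mild bookkeeping to be careful about is that each $\lambda_j$ (or the collar width) be chosen so the reflected points stay inside the chart, and that the patched function restricted to $\Omega^{\pm}$ really equals $u^{\pm}$, both of which are standard.
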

A particular case is proved in Appendix \ref{proofofevenextension} by means of an even extension. From now on we simply  denote $u_{E}^{\pm}$ by $u^\pm$.
\begin{lemma}(Galerkin orthogonality)
Suppose that $u$ solves the \eqref{Problem} and suppose that $u |_{\Omega^\pm} \in H^2(\Omega^\pm)$. Then, we have that
\begin{equation*}
a_h(u,v)=(f,v), \quad \forall v \in V_h,
\end{equation*}
where we use the notation $u=(u^+, u^-)$. Hence, Galerkin orthogonality holds:
\begin{equation*}
a_h(u-u_h,v)=0, \quad \forall v \in V_h.
\end{equation*}
\end{lemma}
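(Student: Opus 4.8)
The plan is to verify consistency directly, i.e.\ to show that the exact solution $u$ (extended to $H^2(\Omega)$ via Lemma \ref{Continuousextension}) satisfies $a_h(u,v)=(f,v)$ for every $v\in V_h$; Galerkin orthogonality then follows at once by subtracting the discrete equation \eqref{fem}. First I would run through the contributions in the definition \eqref{a_h} of $a_h(u,v)$ and discard those that vanish on the exact solution. Since $[u]=0$ on $\Gamma$ by the interface condition \eqref{Problem:c}, both the penalty term $\sum_{T\in\ThG}\frac{\gamma}{h_T}\rho^-\int_{T_\Gamma}[u][v]\,ds$ and the consistency term $\int_\Gamma \rho^-\nabla v^-\cdot\n{n}^-[u]\,ds$ are zero. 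Because the extended functions $u^\pm$ belong to $H^2(\Omega)$, their gradients lie in $H^1(\Omega)$ and are therefore single-valued across every interior edge, so $\llbracket\nabla u^\pm\rrbracket=0$ on each $e\in\EhGm\cup\EhGp$; this annihilates both stabilization sums.

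After these reductions one is left with
\[
a_h(u,v)=\int_{\Op}\rho^+\nabla u^+\cdot\nabla v^+\,dx+\int_{\Om}\rho^-\nabla u^-\cdot\nabla v^-\,dx+\int_\Gamma \rho^-\nabla u^-\cdot\n{n}^-[v]\,ds.
\]
The next step is to integrate by parts on each subdomain. On $\Omega^\pm$ I would write the volume term as $-\int_{\Omega^\pm}\nabla\cdot(\rho^\pm\nabla u^\pm)\,v^\pm\,dx+\int_{\partial\Omega^\pm}\rho^\pm\nabla u^\pm\cdot\n{n}^\pm\,v^\pm\,ds$, use the PDE \eqref{Problem:a} to replace the volume integrand by $f^\pm v^\pm$, and invoke the homogeneous Dirichlet condition $v^\pm|_{\partial\Omega^\pm\setminus\Gamma}\equiv 0$ built into $V_h$ to collapse each boundary integral to an integral over $\Gamma$. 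Summing the two subdomain contributions then produces $(f,v)$ together with the interface flux terms $\int_\Gamma\big(\rho^+\nabla u^+\cdot\n{n}^+\,v^+ + \rho^-\nabla u^-\cdot\n{n}^-\,v^-\big)\,ds$.

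Finally I would combine these boundary terms with the surviving interface term $\int_\Gamma\rho^-\nabla u^-\cdot\n{n}^-(v^+-v^-)\,ds$. The $v^-$ contributions cancel, leaving $\int_\Gamma\big(\rho^+\nabla u^+\cdot\n{n}^+ + \rho^-\nabla u^-\cdot\n{n}^-\big)v^+\,ds$, whose integrand is exactly $[\rho\nabla u\cdot\n{n}]\,v^+$ by definition \eqref{jumps}; the transmission condition \eqref{Problem:d} forces this to vanish. Hence $a_h(u,v)=(f,v)$, and subtracting \eqref{fem} gives $a_h(u-u_h,v)=0$ for all $v\in V_h$.

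There is no serious analytic obstacle here: the argument is a standard consistency computation. The only points demanding care are bookkeeping ones. First, one must be sure the extension produces vanishing gradient jumps across interior edges — this is precisely where the $H^2(\Omega)$ regularity from Lemma \ref{Continuousextension} is indispensable, since generic $V_h$ functions have nonzero $\llbracket\nabla v^\pm\rrbracket$. Second, one must track the asymmetric roles of $v^+$ and $v^-$ (the form \eqref{a_h} tests the flux against $v^-$ only through $\nabla u^-$, while the weak formulation generates flux terms on both sides) so that the interface bookkeeping collapses correctly onto the single flux-continuity condition \eqref{Problem:d}.
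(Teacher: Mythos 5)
Your proof is correct. The paper states this lemma without proof, treating it as a standard consistency computation, and your argument --- killing the penalty and stabilization terms via $[u]=0$ and the single-valuedness of $\nabla u_E^\pm$ for the $H^2(\Omega)$ extensions, integrating by parts on each subdomain, and collapsing the surviving interface terms onto the flux condition \eqref{Problem:d} --- is precisely the argument the paper implicitly relies on.
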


The main result of this section, the best approximation result, is stated below. It follows easily from coercivity, continuity and Galerkin orthogonality.
\begin{theorem}\label{bestapproximationthm}(Best approximation)
Let $\Omega\subseteq \mathbb{R}^2$ be an open polygonal domain. Let $u$ be a solution of problem \eqref{Problem} and assume that $u |_{\Omega^\pm} \in H^2(\Omega^\pm)$. Let $u_h$ be solution of the discrete problem \eqref{fem}. Then, there exists a constant $C>0$ independent of $h$, such that
\begin{equation*}\label{bestapproximation}
\|u-u_h\|_{V}\leq C\inf_{v\in V_h} \|u-v\|_{V_{A}}.
\end{equation*}
\end{theorem}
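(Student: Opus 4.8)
The plan is to run the standard Céa-type argument, since the three structural ingredients are already in place: coercivity on $V_h$ (Lemma \ref{lemma:coercive}), continuity (Lemma \ref{lemma:continuity}), and Galerkin orthogonality. The one point requiring care is that coercivity holds only on the discrete space $V_h$ and only in the norm $\|\cdot\|_V$, whereas continuity is phrased in the augmented norm $\|\cdot\|_{V_A}$; these are reconciled by the trivial but essential inequality $\|w\|_V \le \|w\|_{V_A}$, which is immediate from \eqref{augmentednorm} because $\|\cdot\|_{V_A}^2$ equals $\|\cdot\|_V^2$ plus a nonnegative term.

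First I would fix an arbitrary $v \in V_h$ and split $u - u_h = (u - v) + (v - u_h)$, writing $w_h := v - u_h \in V_h$. Since $w_h$ lies in the discrete space, coercivity gives $c\|w_h\|_V^2 \le a_h(w_h, w_h)$. I would then rewrite the second argument as $w_h = (v - u) + (u - u_h)$ and use Galerkin orthogonality, $a_h(u - u_h, w_h) = 0$, to obtain $c\|w_h\|_V^2 \le a_h(v - u, w_h)$. Here the first argument $v - u$ belongs to $H_h^2(\Omega_h^-)\times H_h^2(\Omega_h^+)$ — using that $u$ now denotes its $H^2(\Omega)$-extension from Lemma \ref{Continuousextension}, so the difference is well defined on $\Omega_h^\pm$ — while $w_h \in V_h$. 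This is precisely the mixed situation covered by the continuity estimate \eqref{continuity}, whose application yields $c\|w_h\|_V^2 \le C\|v - u\|_{V_A}\|w_h\|_V$, and hence $\|w_h\|_V \le (C/c)\|u - v\|_{V_A}$.

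To finish I would combine this with the triangle inequality,
\[
\|u - u_h\|_V \le \|u - v\|_V + \|w_h\|_V \le \|u - v\|_{V_A} + \frac{C}{c}\,\|u - v\|_{V_A} = \Big(1 + \frac{C}{c}\Big)\|u - v\|_{V_A},
\]
where the first term is controlled using $\|u - v\|_V \le \|u - v\|_{V_A}$. Taking the infimum over $v \in V_h$ gives the claim. I do not expect a genuinely hard step here: the whole argument is abstract and treats the earlier lemmas as black boxes. The only thing to watch is the bookkeeping of which norm is admissible at each invocation — coercivity forces the left-hand side into $\|\cdot\|_V$, and continuity must be applied in its mixed form \eqref{continuity} so that the factor carrying $w_h$ appears in $\|\cdot\|_V$ rather than $\|\cdot\|_{V_A}$, which is exactly what permits the cancellation against the coercivity lower bound.
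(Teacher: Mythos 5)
Your argument is correct and is exactly the route the paper intends: the paper states that the theorem ``follows easily from coercivity, continuity and Galerkin orthogonality'' and omits the details, which are precisely the standard C\'ea-type steps you carry out, including the correct use of the mixed continuity estimate \eqref{continuity} and the inequality $\|\cdot\|_V \le \|\cdot\|_{V_A}$.
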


\subsection{Energy error estimates}
In order to prove an error estimate in terms of the $V$-norm we introduce the following interpolation operator:  Define $I_h: H^1(\Omega_h^+)\times H^1(\Omega_h^-)\rightarrow V_h$ such that
\begin{equation*}
(I_h u)^{\pm} = J_h u^{\pm},\quad \mbox{ for } u=(u^+,u^-) \in H^1(\Omega_h^+)\times H^1(\Omega_h^-),
\end{equation*}
where $J_h$ is the interpolant onto the standard continuous piecewise linear polynomials introduced in \cite{MR1011446}. Consequently, the following estimate for the interpolation error follows from the properties of the Scott-Zhang interpolation operator and the  extension result: Lemma \ref{Continuousextension}.
\begin{lemma}\label{interpolationerrorlmm}
Consider the definition of the interpolation operator $I_h$ given above. Then, there exists a constant $C>0$, independent of $h$, such that:
\begin{equation}\label{lemma1}
\|u-I_h u\|_{V_{A}} \le C h (\sqrt{\rho^+} \|D^2 u\|_{L^2(\Omega^+)}+\sqrt{\rho^-} \|D^2 u\|_{L^2(\Omega^-)}).
\end{equation}
\end{lemma}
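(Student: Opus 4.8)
The plan is to expand $\|u - I_h u\|_{V_A}^2$ into its constituent terms and bound each one separately, in every case reducing to the local Scott--Zhang estimates together with an appropriate trace inequality. Throughout I exploit that, by Lemma \ref{Continuousextension}, each $u^{\pm}$ is identified with its $H^2(\Omega)$ extension; consequently $\nabla u^{\pm}$ has a single-valued trace on every interior edge (so $\llbracket \nabla u^{\pm}\rrbracket = 0$), the second derivatives of the piecewise-linear interpolant vanish elementwise (so $D^2(u^{\pm} - J_h u^{\pm}) = D^2 u^{\pm}$ on each $T$), and the stability bound $\|u^{\pm}_E\|_{H^2(\Omega)} \le C\|u^{\pm}\|_{H^2(\Omega^{\pm})}$ lets me replace $L^2(\Omega)$ norms of $D^2 u^{\pm}$ by $L^2(\Omega^{\pm})$ norms at the very end. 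The local Scott--Zhang estimates I would use are $\|u^{\pm} - J_h u^{\pm}\|_{L^2(T)} \le C h_T^2\|D^2 u^{\pm}\|_{L^2(\omega_T)}$ and $\|\nabla(u^{\pm} - J_h u^{\pm})\|_{L^2(T)}\le C h_T\|D^2 u^{\pm}\|_{L^2(\omega_T)}$, where $\omega_T$ is the usual local patch; finite overlap of the patches turns the resulting elementwise sums into global $D^2$-norms.

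For the bulk term $\|\sqrt{\rho}\,\nabla(u-I_h u)\|_{L^2(\Omega)}^2$ the gradient estimate gives directly $C h^2(\rho^+\|D^2 u^+\|_{L^2(\Omega^+)}^2 + \rho^-\|D^2 u^-\|_{L^2(\Omega^-)}^2)$. For the interface and augmented contributions I would invoke the trace inequality
\begin{equation*}
\|w\|_{L^2(\TG)}^2 \le C\bigl(h_T^{-1}\|w\|_{L^2(T)}^2 + h_T\|\nabla w\|_{L^2(T)}^2\bigr),\qquad T\in\ThG,
\end{equation*}
valid for $h$ small enough because $\Gamma$ is smooth. Applying it with $w = u^{\pm} - J_h u^{\pm}$ and then the triangle inequality $\|[u-I_h u]\|_{L^2(\TG)} \le \|u^+ - J_h u^+\|_{L^2(\TG)} + \|u^- - J_h u^-\|_{L^2(\TG)}$ controls the penalty term $\sum_{T\in\ThG} h_T^{-1}\|\sqrt{\rho^-}[u - I_h u]\|_{L^2(\TG)}^2$; applying it with $w = \nabla(u^- - J_h u^-)$, so that $\nabla w = D^2 u^-$ elementwise, controls the augmented term $\sum_{T\in\ThG} h_T\|\sqrt{\rho^-}\,\nabla(u-I_h u)^-\cdot\n{n}^-\|_{L^2(\TG)}^2$. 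Inserting the Scott--Zhang bounds, both come out as $C h^2 \rho^-(\|D^2 u^+\|_{L^2(\Omega^+)}^2 + \|D^2 u^-\|_{L^2(\Omega^-)}^2)$.

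For the ghost-penalty edge terms I would use the standard discrete/continuous trace inequality on each triangle adjacent to $e$, together with $\llbracket \nabla u^{\pm}\rrbracket = 0$ and $D^2(u^{\pm}-J_h u^{\pm}) = D^2 u^{\pm}$, to obtain
\begin{equation*}
|e|\,\|\llbracket \nabla(u^{\pm}-J_h u^{\pm})\rrbracket\|_{L^2(e)}^2 \le C\bigl(\|\nabla(u^{\pm}-J_h u^{\pm})\|_{L^2(T_1\cup T_2)}^2 + h^2\|D^2 u^{\pm}\|_{L^2(T_1\cup T_2)}^2\bigr),
\end{equation*}
after which the Scott--Zhang gradient bound makes the right-hand side $C h^2\|D^2 u^{\pm}\|^2$; summing over $\EhGm$ and $\EhGp$ with weights $\rho^-$ and $\rho^+$ yields the matching $C h^2\rho^{\pm}\|D^2 u^{\pm}\|_{L^2(\Omega^{\pm})}^2$.

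Finally I would collect the five pieces. The only term whose weight does not already match the target is the penalty contribution coming from $u^+$, which carries the factor $\rho^-$ rather than $\rho^+$; here I use the standing hypothesis $\rho^+ \ge \rho^-$ to bound $\rho^-\|D^2 u^+\|^2 \le \rho^+\|D^2 u^+\|^2$, after which everything is dominated by $C h^2(\rho^+\|D^2 u^+\|_{L^2(\Omega^+)}^2 + \rho^-\|D^2 u^-\|_{L^2(\Omega^-)}^2)$, and taking square roots gives \eqref{lemma1}. I expect the main technical point to be the trace inequality on the curved piece $\TG$: it is the place where smoothness of $\Gamma$ and the smallness of $h$ enter, and it is what ties the interface terms back to interior element norms. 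The coefficient bookkeeping is then routine once one notices that the ordering $\rho^+\ge\rho^-$ is exactly what is needed to keep the mismatched $u^+$ penalty term under control.
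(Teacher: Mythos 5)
Your proposal is correct and follows exactly the route the paper intends: the paper offers no written proof of this lemma, merely asserting that it ``follows from the properties of the Scott--Zhang interpolation operator and the extension result,'' and your term-by-term expansion of $\|\cdot\|_{V_A}^2$ --- local Scott--Zhang estimates for the bulk term, the cut-element trace inequality $\|w\|_{L^2(T_\Gamma)}^2 \le C(h_T^{-1}\|w\|_{L^2(T)}^2 + h_T\|\nabla w\|_{L^2(T)}^2)$ for the interface penalty and augmented terms, the vanishing of $\llbracket\nabla u^{\pm}\rrbracket$ for the ghost-penalty terms, and $\rho^-\le\rho^+$ for the coefficient bookkeeping --- is precisely the standard argument being invoked. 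The one loose end (shared by the paper) is your final step ``replace $L^2(\Omega)$ norms of $D^2u^{\pm}$ by $L^2(\Omega^{\pm})$ norms'': Lemma \ref{Continuousextension} only gives $\|D^2u_E^{\pm}\|_{L^2(\Omega)}\le C\|u^{\pm}\|_{H^2(\Omega^{\pm})}$, not a bound by the seminorm $\|D^2u^{\pm}\|_{L^2(\Omega^{\pm})}$ alone, so to land on \eqref{lemma1} as stated one should either extend $u^{\pm}-\pi$ for a suitable affine $\pi$ (using that $J_h$ reproduces affines, so $u-I_hu$ is unchanged up to the choice of extension) together with a Deny--Lions estimate, or accept the full $H^2$ norm on the right-hand side.
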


\revblue{
In addition, we assume that the following elliptic regularity bound holds,
\begin{equation}\label{prop1eqn}
\rho^{+}\| D^{2} u \|_{L^{2}(\Omega^{+})} + \rho^{-}\|D^{2}u\|_{L^{2}(\Omega^{-})} \leq C \|f\|_{L^{2}(\Omega)}.
\end{equation}
For instance, this bound is satisfied in the two-dimensional case (see \cite{MR2684351}) for $\Omega$ convex and polyhedral.}

An energy error estimate follows from Theorem \ref{bestapproximationthm}, Lemma \ref{interpolationerrorlmm} and elliptic regularity \eqref{prop1eqn}. We state it in the corollary below.

\begin{corollary}\label{cor1}(Standard energy error estimate)
Let $u$ be a solution of problem \eqref{Problem}, and let $u_h$ be the solution of the discrete problem \eqref{fem}. Suppose that $\Omega$ is convex. Then, there exists $C>0$ independent of $\rho^\pm$, such that
\begin{equation*}
\|u-u_h\|_{V} \le \frac{C \, h}{\sqrt{\rho^-}} \|f\|_{L^2(\Omega)} .
\end{equation*}
\end{corollary}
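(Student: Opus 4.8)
The plan is to chain the three results quoted immediately before the statement, so that the only genuine content is tracking where the factor $1/\sqrt{\rho^-}$ comes from. Since $\Omega$ is convex, the elliptic regularity bound \eqref{prop1eqn} is available, and in particular it presupposes $u|_{\Omega^\pm}\in H^2(\Omega^\pm)$; this is exactly the regularity hypothesis needed to apply both Theorem \ref{bestapproximationthm} and Lemma \ref{interpolationerrorlmm}.

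First I would invoke the best-approximation estimate of Theorem \ref{bestapproximationthm} and specialize the infimum to the competitor $I_h u \in V_h$, yielding
\[
\|u-u_h\|_V \le C \inf_{v\in V_h}\|u-v\|_{V_A} \le C\,\|u - I_h u\|_{V_A}.
\]
Lemma \ref{interpolationerrorlmm} then bounds the right-hand side by $C h\bigl(\sqrt{\rho^+}\,\|D^2 u\|_{L^2(\Omega^+)} + \sqrt{\rho^-}\,\|D^2 u\|_{L^2(\Omega^-)}\bigr)$, so that it remains only to control this square-root-weighted quantity in terms of $\|f\|_{L^2(\Omega)}$.

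The one step requiring care is reconciling the $\sqrt{\rho}$-weighting natural to the energy norm $\|\cdot\|_V$ with the $\rho$-weighting of the elliptic regularity bound \eqref{prop1eqn}. I would rewrite each second-derivative term by extracting a full power of $\rho$: the term on $\Omega^-$ is handled directly via $\sqrt{\rho^-} = \rho^-/\sqrt{\rho^-}$, while for the term on $\Omega^+$ I use the standing hypothesis $\rho^+\ge\rho^-$ to estimate $\sqrt{\rho^+} = \rho^+/\sqrt{\rho^+} \le \rho^+/\sqrt{\rho^-}$. Factoring out $1/\sqrt{\rho^-}$ gives
\[
\sqrt{\rho^+}\,\|D^2 u\|_{L^2(\Omega^+)} + \sqrt{\rho^-}\,\|D^2 u\|_{L^2(\Omega^-)} \le \frac{1}{\sqrt{\rho^-}}\Bigl(\rho^+\|D^2 u\|_{L^2(\Omega^+)} + \rho^-\|D^2 u\|_{L^2(\Omega^-)}\Bigr),
\]
and \eqref{prop1eqn} bounds the parenthesis by $C\|f\|_{L^2(\Omega)}$. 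Combining the three displays produces $\|u-u_h\|_V \le (Ch/\sqrt{\rho^-})\|f\|_{L^2(\Omega)}$, with $C$ independent of $\rho^\pm$ because every constant feeding into the argument — coercivity, continuity, interpolation, and elliptic regularity — has already been arranged to be contrast-independent.

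I do not expect a genuine obstacle here: the proof is essentially bookkeeping on the weights. The point worth emphasizing, rather than any technical difficulty, is that the mismatch between the $\sqrt{\rho}$ scaling of the energy norm and the $\rho$ scaling of elliptic regularity is precisely what forces the spurious $1/\sqrt{\rho^-}$ factor, and that this loss is intrinsic to any argument driven by coercivity in $\|\cdot\|_V$. This is exactly the suboptimality that Section \ref{section4} later removes by passing to a flux estimate via a discrete extension from $\Omega^+$ to all of $\Omega$.
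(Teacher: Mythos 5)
Your proposal is correct and follows exactly the route the paper intends: the paper gives no written proof, stating only that the corollary ``follows from Theorem \ref{bestapproximationthm}, Lemma \ref{interpolationerrorlmm} and elliptic regularity \eqref{prop1eqn},'' and your chaining of those three results, with the weight bookkeeping $\sqrt{\rho^+}\le \rho^+/\sqrt{\rho^-}$ via $\rho^+\ge\rho^-$, is precisely the intended argument. Your closing remark correctly identifies the $\sqrt{\rho}$-versus-$\rho$ mismatch as the source of the $1/\sqrt{\rho^-}$ factor that Section \ref{section4} removes.
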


\section{Error estimate for the  flux $\rho\nabla(u-u_h)$}\label{section4}
In order to prove the main result of this paper, the error estimate for the flux, we need a discrete extension result.

\begin{lemma}\label{Discreteextensionlmm}(Discrete extension)
Assume that the triangulation $\mathcal{T}_h$ is quasi-uniform. Let $v_h\in V_h^{+}$. Then, there exists a function $E v_h \in V_h^{c}=\{v \in C_0(\Omega): v|_T \in \mathbb{P}^1(T), \forall T \in \mathcal{T}_h\}$, such that $Ev_h = v_h$ in $\Omega_h^{+}$ and
\begin{equation*}
\|Ev_h\|_{H^{1}(\Omega)}\leq C \|v_h\|_{H^1(\Omega_h^{+})},
\end{equation*}
with $C>0$ independent of $h$.
\end{lemma}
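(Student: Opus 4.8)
The plan is to build $Ev_h$ in three stages: a continuous Sobolev extension of $v_h$ off $\Omega_h^{+}$, an $H^1$-stable projection back onto the global piecewise-linear space $V_h^c$, and a local correction on a thin boundary layer so that the identity $Ev_h = v_h$ on $\Omega_h^{+}$ holds exactly. The quasi-uniformity hypothesis is used twice: to guarantee a Sobolev extension operator whose norm is uniform in $h$, and to pass between the $H^1$-norm of a discrete function and a weighted $\ell^2$-norm of its nodal values.

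First I would fix a bounded linear extension operator $\mathcal{E}\colon H^1(\Omega_h^{+})\to H^1(\Omega)$ with $\|\mathcal{E}\|$ independent of $h$, and set $\tilde v = \mathcal{E} v_h$, so that $\|\tilde v\|_{H^1(\Omega)}\le C\|v_h\|_{H^1(\Omega_h^{+})}$. Such an operator exists because, by shape regularity and quasi-uniformity, the polygonal domains $\Omega_h^{+}$ have a uniformly bounded Lipschitz character: each edge of $\partial\Omega_h^{+}$ has length comparable to $h$ and the interior angles are bounded away from $0$ and $2\pi$, so the Stein-type extension of Lemma \ref{Continuousextension}, applied on the rescaled configuration, yields a constant depending only on $\kappa$. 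Next I would apply the Scott--Zhang quasi-interpolant $J_h$ of \cite{MR1011446} mapping into $V_h^c$ (with its degrees of freedom on $\partial\Omega$ set to zero), which is $H^1$-stable uniformly in $h$: $\|J_h\tilde v\|_{H^1(\Omega)}\le C\|\tilde v\|_{H^1(\Omega)}$.

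The function $J_h\tilde v$ is globally continuous, piecewise linear, and has controlled norm, but need not coincide with $v_h$ on $\Omega_h^{+}$. The key observation is that the defect is confined to a boundary layer: for every node $z$ whose associated Scott--Zhang face $\sigma_z$ is contained in $\Omega_h^{+}$, the operator $J_h$ reproduces $v_h$ (which is piecewise linear there and equal to $\tilde v$), so $(J_h\tilde v)(z)=v_h(z)$. Only nodes of $\overline{\Omega_h^{+}}$ lying within a fixed number of element layers of $\partial\Omega_h^{+}$ can carry a nonzero defect $d(z):=v_h(z)-(J_h\tilde v)(z)$. I would then define the correction $c_h\in V_h^c$ to be the nodal interpolant with $c_h(z)=d(z)$ on the nodes of $\overline{\Omega_h^{+}}$ and $c_h(z)=0$ elsewhere, and set $Ev_h:=J_h\tilde v + c_h$. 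By construction $Ev_h$ agrees with $v_h$ at every node of $\overline{\Omega_h^{+}}$, hence on all of $\Omega_h^{+}$, and it vanishes on $\partial\Omega$ (compatible because $v_h$, and therefore $\tilde v$ and $c_h$, already vanish on any part of $\partial\Omega$ they reach, since $\Gamma\cap\partial\Omega=\emptyset$).

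It remains to bound $\|c_h\|_{H^1(\Omega)}$. Using the equivalences (valid for a quasi-uniform mesh in two dimensions) $\|c_h\|_{L^2(\Omega)}^2 \simeq h^2\sum_z |c_h(z)|^2$ and $|c_h|_{H^1(\Omega)}^2 \simeq \sum_{e=(z_1,z_2)}|c_h(z_1)-c_h(z_2)|^2$, together with a trace/inverse estimate controlling each boundary-layer defect $d(z)$ by the local $H^1$-norm of $v_h$ on the patch of $z$, I would obtain $\|c_h\|_{H^1(\Omega)}\le C\|v_h\|_{H^1(\Omega_h^{+})}$; combining the three bounds gives the claim. The main obstacle I anticipate is the first step: making the Sobolev extension constant genuinely uniform in $h$ for the staircase domains $\Omega_h^{+}$, i.e. controlling the Lipschitz character of $\partial\Omega_h^{+}$ uniformly. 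Once that uniformity is secured, the interpolation stability is standard and the defect bookkeeping is routine under quasi-uniformity.
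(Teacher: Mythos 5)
There is a genuine gap, and it sits exactly at the load-bearing point of your argument: the first step. You posit a bounded extension operator $\mathcal{E}\colon H^1(\Omega_h^{+})\to H^1(\Omega)$ with norm independent of $h$, justified by the claim that the discrete domains $\Omega_h^{+}$ have ``uniformly bounded Lipschitz character.'' This is not established by anything you cite (Lemma \ref{Continuousextension} concerns the fixed smooth domains $\Omega^{\pm}$, not the $h$-dependent staircase domains), and it is not true in the generality you need. The boundary $\partial\Omega_h^{+}$ is made of mesh edges selected by the condition $T\cap\Omega^{+}\neq\emptyset$, and shape regularity does not prevent degenerate configurations: for example, two triangles of $\mathcal{T}_h^{+}$ sharing only a vertex while the triangle between them lies entirely in $\Omega^{-}$, which produces a cusp at which $\Omega_h^{+}$ fails to be a Lipschitz domain at all. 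Even when $\partial\Omega_h^{+}$ is locally a graph, the natural scale of the charts is $O(h)$, so the constant in a generic Stein-type extension theorem is not obviously uniform in $h$. In short, the uniform Sobolev extension from the discrete domain is essentially equivalent in difficulty to the lemma itself, so assuming it begs the question; the remaining steps (Scott--Zhang stability, the boundary-layer correction, and the nodal bookkeeping) are fine but rest on this unsupported foundation.

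The paper's proof is designed precisely to avoid this obstacle: it extends from the \emph{smooth} domain $\Omega^{+}$, not from $\Omega_h^{+}$, using the explicit reflection operator $G$ of Appendix \ref{proofofevenextension}, whose constant depends only on $\Gamma$ and not on $h$. The mismatch between $\Omega^{+}$ and $\Omega_h^{+}$ is then handled entirely at the discrete level: a Cl\'ement-type interpolant $P_1(Gv)$ is $H^1$-stable and is subsequently modified at the finitely many nodes of $\Omega_h^{+}\setminus\Omega^{+}$ so as to coincide with $v_h$ there; the resulting correction $e_h=P_2(Gv)-P_1(Gv)$ is supported in a single layer of elements and is controlled by combining the reflection structure of $G$ with an inverse estimate, exploiting that $v_h$ is piecewise linear on $\Omega_h^{+}$ so that $\|\nabla v_h\|_{L^{\infty}}$ near a cut element is bounded by $h^{-1}\|\nabla v_h\|_{L^2}$ over a ball $B_{dh}(T)\subset\Omega_h^{+}$. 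If you replace your first step by an extension from $\Omega^{+}$ and push the discrepancy into your correction term $c_h$, your argument essentially becomes the paper's; as written, however, the uniform extension from the staircase domain is a missing idea, not a routine technicality.
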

\begin{proof}
See Appendix \ref{proofdiscreteextension}.
\end{proof}

Considering the discrete extension lemma, we state a bound for the $H^1$-norm in $\Omega_h^{+}$. The proof follows easily using Proposition \ref{propositiongeom} (with roles of $\Omega^+$ and $\Omega^-$ reversed), \revblue{which allows us to control the terms in $\Omega_{h}^{+}$ by terms in $\Omega^{+}$}. Similar statements have appeared before in \cite{MR2738930} and \cite{MR3268662}.
\begin{lemma}\label{boundextensionlmm}
Let $v\in V_{h}^{+}$. Then, we have
\begin{equation*}
\|v\|^{2}_{H^{1}(\Omega_{h}^{+})}\leq C\left(\|v\|^{2}_{H^1(\Omega^{+})}+\sum_{e\in\EhGp}|e|\,\|\llbracket \nabla v\rrbracket\|^{2}_{L^{2}(e)}\right).
\end{equation*}
\end{lemma}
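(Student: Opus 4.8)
The plan is to mirror the edge-chain argument from the coercivity proof (Lemma~\ref{lemma:coercive}), now transporting gradient and function values from the badly-cut elements living in $\Omega_h^{+}$ onto a well-cut element supplied by the reversed form of Proposition~\ref{propositiongeom}. First I would split the domain of integration. Since every element meeting $\Omega^{+}$ lies in $\mathcal{T}_h^{+}$, we have $\Omega^{+}\subseteq\Omega_h^{+}$, and a non-cut element of $\mathcal{T}_h^{+}$ is contained in $\overline{\Omega^{+}}$; hence the overshoot $\Omega_h^{+}\setminus\overline{\Omega^{+}}$ is exactly the union of the parts lying in $\Omega^{-}$ of the cut elements $T\in\ThG$. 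As both $\|\cdot\|_{L^2}^2$ and $\|\nabla\cdot\|_{L^2}^2$ are additive over disjoint subdomains, this gives
\begin{equation*}
\|v\|_{H^1(\Omega_h^{+})}^2 = \|v\|_{H^1(\Omega^{+})}^2 + \sum_{T\in\ThG}\|v\|_{H^1(T\cap\Omega^{-})}^2,
\end{equation*}
so it remains to bound each local contribution $\|v\|_{H^1(T\cap\Omega^{-})}^2\le\|v\|_{H^1(T)}^2$ by $\|v\|_{H^1(\Omega^{+})}^2$ together with the jump terms on $\EhGp$.

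For the gradient, fix a cut element $T$ and a vertex $z\in\overline{\Omega}^{+}$ of $T$. Proposition~\ref{propositiongeom} with the roles of $\Omega^{+}$ and $\Omega^{-}$ interchanged produces an element $T_z\in\Delta_z$ with $|T_z\cap\Omega^{+}|\ge C h_{T_z}^2$; since $v$ is piecewise linear, $\nabla v$ is constant on each element, so on such a well-cut element $\|\nabla v\|_{L^2(T_z)}^2\le C\|\nabla v\|_{L^2(T_z\cap\Omega^{+})}^2$. I would then take the shortest edge-path $E(T)=\{e_1,\dots,e_N\}$ joining $T$ to $T_z$ around the node $z$, exactly as in the coercivity proof, and telescope: crossing each edge $e_j$ costs a term $|e_j|\,\|\llbracket\nabla v\rrbracket\|_{L^2(e_j)}^2$, because only the normal component of $\nabla v$ jumps (the tangential component of the gradient of a continuous piecewise linear function is continuous across edges), while passing between the two edges of a single element costs only a factor of the shape-regularity constant $\kappa$. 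This yields the local estimate
\begin{equation*}
\|\nabla v\|_{L^2(T)}^2 \le c(\kappa)\sum_{e\in E(T)}|e|\,\|\llbracket\nabla v\rrbracket\|_{L^2(e)}^2 + C(\kappa)\,\|\nabla v\|_{L^2(T_z\cap\Omega^{+})}^2 .
\end{equation*}
Summing over $T\in\ThG$ and using that the valence of each node is uniformly bounded by shape regularity, each chain $E(T)$ has length bounded independently of $h$, and each edge of $\EhGp$ and each well-cut $T_z$ is reused in only finitely many chains, so the summation produces the stated constants with no $h$-dependence. The $L^2$ part is then handled by a simpler variant exploiting the continuity of $v$: norm equivalence for linear functions on $T_z$ gives $\|v\|_{L^2(T_z)}^2\le C\|v\|_{L^2(T_z\cap\Omega^{+})}^2$, and the nodal value $v(z)$ shared by $T$ and $T_z$, combined with the already-controlled gradient, transfers $\|v\|_{L^2(T)}^2$ onto $T_z$ up to contributions that are of higher order in $h$ relative to the gradient terms.

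The main obstacle I anticipate is the geometric bookkeeping of the telescoping chain, namely verifying that the traversed edges all lie in $\EhGp$ so that exactly the jump terms appearing in the statement are produced, and that the finite-overlap counting is uniform in $h$. The key point making this work for $h$ small is that one may terminate the chain at the first element with $|T'\cap\Omega^{+}|\ge c\,h_{T'}^2$ (in particular at the first element lying entirely in $\Omega^{+}$), so the path need only traverse cut elements on the $\Omega^{+}$ side of the fan around $z$; every edge crossed is then incident to a cut element and hence belongs to $\EhGp$. These are precisely the technical points underlying the coercivity proof, so I would reuse that construction essentially verbatim with the two subdomains swapped.
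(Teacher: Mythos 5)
Your proposal is correct and follows essentially the same route as the paper: decompose $\Omega_h^{+}$ into $\Omega^{+}$ plus the cut overshoot, control the gradient by the reversed Proposition \ref{propositiongeom} together with the edge-chain telescoping from the coercivity proof, and handle the $L^2$ part via the shared nodal value, an inverse estimate on $T_z$, and the bound $\|v\|_{L^2(T_z)}\le C\|v\|_{L^2(T_z\cap\Omega^{+})}$ for linear $v$. The paper's write-up is just the concrete instantiation of your sketch (it records the intermediate bound $\|v\|_{L^{2}(\Omega^+_{h})} \leq C(\|v\|_{L^{2}(\Omega^+)} + h \|\nabla v\|_{L^{2}(\Omega_h^+)})$ before reabsorbing the gradient term), so no substantive difference remains.
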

\revblue{
\begin{proof}
We first bound the $H^1$ semi-norm. By the argument given in proof of Lemma \ref{lemma:coercive} which uses Proposition \ref{propositiongeom} it follows that
\[
\| \nabla v\|^{2}_{L^{2}(\Omega_{h}^+)}\leq C\left( \|\nabla v\|_{L^{2}(\Omega^{+})}^{2} + \sum_{e\in\EhGp}|e|\,\|\llbracket \nabla v\rrbracket\|^{2}_{L^{2}(e)} \right)
\]
where $C$ depends on the shape regularity constant $\kappa$. Now, in order to bound the $L^2$ norm let $T$ be an element $\Omega_h^+$ not totally contained in $\Omega^+$. Let $z$ be a node of $T$ such that $z\in \Omega^{+}$. Then, by Proposition \ref{propositiongeom}, there exists an element $T_{z}$ in the patch of $z$ such that :
\[ |T_{z}\cap \Omega^{+}| \geq C h_{T_{z}}^{2}. \]
Then, using the fact that $v$ is a linear function on $T$ we have that
\[
|v(x)| \leq |v(z)| + h_{T}|\nabla v|_{T}|,\quad \mbox{for all } x \in T,
\]  
and then we have
\[
 \|v\|_{L^{2}(T)} \leq C h_{T} |v(z)| + h_{T} \|\nabla v\|_{L^{2}(T)}. 
 \] 
Using an inverse estimate we get
\[
|v(z)| \leq C h^{-1}_{T_{z}} \|v\|_{L^{2}(T_{z})} .
\]
It is not difficult to see, using the fact that $|T_{z}\cap \Omega^{+}| \geq C h_{T_{z}}^{2}$  and that $v$ is linear on $T_z$ that 
\[
\|v\|_{L^{2}(T_{z})}  \le C\, \|v\|_{L^{2}(T_{z}\cap \Omega^{+})}.
\]
Therefore,  we obtain 
\[\|v\|_{L^{2}(T)} \leq C \|v\|_{L^{2}(T_{z}\cap \Omega^{+})} +  h_{T} \|\nabla v\|_{L^{2}(T)}.\]
Using this inequality repeatedly we obtain that
\[ \|v\|_{L^{2}(\Omega^+_{h})} \leq C\left(\|v\|_{L^{2}(\Omega^+)} + h \|\nabla v\|_{L^{2}(\Omega_h^+)}\right),\]
which proves the result after applying the estimate derived for the $H^1$ semi-norm.
\end{proof}
}

Now we are in position to state and prove the main result of this paper. \revblue{Observe that if we have a  conforming piecewise linear discretization with an interface $\Gamma$ aligning the mesh $\mathcal{T}_h$, then the proof would be short. Precisely, the fact that the mesh does not align the interface creates some extra technicalities on the proof.} 
\begin{theorem}\label{Maintheorem}(Main result)
Let $u$ be a solution of problem \eqref{Problem} and let $u_h$ be solution of the discrete problem \eqref{fem}. Assume the triangulation is quasi-uniform and that   \revblue{$(\rho^+ \| D^2 u\|_{L^2(\Omega^+)}+ \rho^- \|D^2 u\|_{L^2(\Omega^-)})$ is bounded} .Then, there exists a constant $C>0$, independent of $h$ and $\rho^{\pm}$, such that
\begin{equation*}
\|\rho\nabla(u-u_h)\|_{L^{2}(\Omega)}\leq C h   \revblue{(\rho^+ \| D^2 u\|_{L^2(\Omega^+)}+ \rho^- \|D^2 u\|_{L^2(\Omega^-)} )}.  
\end{equation*}
We remind the reader that
\begin{equation*}
\|\rho\nabla(u-u_h)\|_{L^{2}(\Omega)}^2=\|\rho^{-}\nabla(u-u_h)^-\|_{L^{2}(\Omega^-)}^2+\|\rho^{+}\nabla(u-u_h)^+\|_{L^{2}(\Omega^+)}^2.
\end{equation*}
\end{theorem}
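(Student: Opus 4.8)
The plan is to split the flux into its $\Om$ and $\Op$ contributions and to show that only the $\Op$ part is genuinely delicate. On $\Om$ the estimate is essentially free: combining the best–approximation bound of Theorem \ref{bestapproximationthm} with the interpolation estimate of Lemma \ref{interpolationerrorlmm} gives $\|u-u_h\|_V\le Ch(\sqrt{\rho^+}\|D^2u\|_{L^2(\Op)}+\sqrt{\rho^-}\|D^2u\|_{L^2(\Om)})$, so that $\|\rho^-\nabla(u-u_h)^-\|_{L^2(\Om)}=\sqrt{\rho^-}\,\|\sqrt{\rho^-}\nabla(u-u_h)^-\|_{L^2(\Om)}$, and using $\rho^-\le\rho^+$ (hence $\sqrt{\rho^-\rho^+}\le\rho^+$) this is already controlled by the right–hand side. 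The whole difficulty is the flux on $\Op$, where the naive step of multiplying the energy bound by $\sqrt{\rho^+}$ produces the forbidden factor $\sqrt{\rho^+\rho^-}\,\|D^2u\|_{L^2(\Om)}$.

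To treat $\Op$ I would write $(u-u_h)^+=(u^+-J_hu^+)+w_h$ with $w_h=J_hu^+-u_h^+\in V_h^+$; the interpolation part is controlled by $C\rho^+ h\|D^2u\|_{L^2(\Op)}$ via Lemma \ref{interpolationerrorlmm}, so it remains to bound $\rho^+\|\nabla w_h\|_{L^2(\Op)}$. Here the discrete extension of Lemma \ref{Discreteextensionlmm} is the crucial device: set $v=Ew_h\in V_h^c\subset V_h$, which agrees with $w_h$ on $\Omega_h^+$ and, being globally continuous, satisfies $[v]=0$ on $\Gamma$. Writing $\|w_h\|_+^2:=\|\nabla w_h\|_{L^2(\Op)}^2+\gamma_g^+\sum_{e\in\EhGp}|e|\,\|\llbracket\nabla w_h\rrbracket\|_{L^2(e)}^2$, I would use Lemma \ref{boundextensionlmm} together with a Poincar\'e inequality (immediate when $\Op$ meets $\partial\Omega$, where $w_h$ vanishes; in the enclosed case it must be recovered through the coupling to the homogeneous data across $\Gamma$, which is a point requiring care) to obtain $\|v\|_{H^1(\Omega)}\le C\|w_h\|_+$.

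The key identity comes from testing with $v$. Put $\zeta=I_hu-u_h\in V_h$, so $\zeta^+=w_h$; Galerkin orthogonality gives $a_h(\zeta,v)=a_h(I_hu-u,v)$. Expanding $a_h(\zeta,v)$ and discarding every term containing $[v]$ (which vanishes since $v$ is continuous), the diagonal $\Op$–contribution reproduces exactly $\rho^+\|w_h\|_+^2$, whence
\begin{align*}
\rho^+\|w_h\|_+^2 &= a_h(I_hu-u,v)-\rho^-\int_{\Om}\nabla\zeta^-\cdot\nabla v^- \\
&\quad -\int_\Gamma \rho^-\nabla v^-\cdot\n{n}^-[\zeta]-\gamma_g^-\rho^-\sum_{e\in\EhGm}|e|\int_e\llbracket\nabla v^-\rrbracket\llbracket\nabla\zeta^-\rrbracket.
\end{align*}
I would bound each term on the right by $Ch(\rho^+\|D^2u\|_{L^2(\Op)}+\rho^-\|D^2u\|_{L^2(\Om)})\,\|w_h\|_+$ and then divide by $\|w_h\|_+$ to conclude $\rho^+\|w_h\|_+\le Ch(\rho^+\|D^2u\|_{L^2(\Op)}+\rho^-\|D^2u\|_{L^2(\Om)})$, which closes the argument. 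The essential subtlety is that the first term must be expanded \emph{term by term} rather than estimated through the continuity Lemma \ref{lemma:continuity}: continuity would pair $\sqrt{\rho^-}\|D^2u\|_{L^2(\Om)}$ from $\|I_hu-u\|_{V_A}$ against $\sqrt{\rho^+}\|w_h\|_+$ from $\|v\|_V$ and resurrect the bad factor $\sqrt{\rho^+\rho^-}$, whereas the genuine terms pair the $\Om$–data of $I_hu-u$ against $v^-$, which in the volume, interface and stabilization terms carries only the weight $\rho^-$.

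The remaining three terms, all carrying $\rho^-$, are estimated using the energy bound for $\zeta^-=-(u^--J_hu^-)+(u-u_h)^-$, the interpolation estimates on $\Gamma$ for $[\zeta]$, and trace/inverse estimates on the cut elements (the control of $\Omega_h^\pm$ against $\Op,\Om$ resting on Proposition \ref{propositiongeom} and Lemma \ref{boundextensionlmm}). The only place a mixed power of $\rho$ survives is through $\|\nabla(u-u_h)^-\|_{L^2(\Om)}\le (Ch/\sqrt{\rho^-})(\sqrt{\rho^+}\|D^2u\|_{L^2(\Op)}+\sqrt{\rho^-}\|D^2u\|_{L^2(\Om)})$ and the interface jump $[\zeta]$, each of which yields a factor $\sqrt{\rho^-\rho^+}\,\|D^2u\|_{L^2(\Op)}$ that the inequality $\sqrt{\rho^-\rho^+}\le\rho^+$ absorbs into the admissible $\rho^+\|D^2u\|_{L^2(\Op)}$; it is precisely here that $\rho^+\ge\rho^-$ is used essentially. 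I expect the main obstacle to be exactly this bookkeeping of $\rho$–powers in the cross terms—ensuring that no surviving quantity carries $\sqrt{\rho^+/\rho^-}$, which forces the continuity estimate to be bypassed and the identity above to be arranged so that the contrast enters only through $\sqrt{\rho^-\rho^+}\le\rho^+$—with the unfitted geometry supplying the additional technical layer.
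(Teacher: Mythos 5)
Your proposal is correct and follows essentially the same route as the paper: the same splitting into the easy $\Omega^-$ bound via $\sqrt{\rho^-\rho^+}\le\rho^+$, the same test function $E\bigl((I_hu-u_h)^+\bigr)$ built from the discrete extension of Lemma \ref{Discreteextensionlmm} combined with Lemma \ref{boundextensionlmm}, and the same term-by-term expansion of the Galerkin-orthogonality identity (your displayed identity is an algebraic rearrangement of the paper's $I_1+I_2$ decomposition), including the key observation that the continuity lemma must be bypassed so that the $\Omega^-$ data only ever meets the weight $\rho^-$. The one point you flag as delicate — Poincar\'e when $\Omega^+$ is the inclusion — is handled in the paper simply by normalizing $v_h$ to have zero mean on $\Omega^+$, which is harmless since only $\nabla Ev_h$ enters the identity.
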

\begin{proof}
Observe that the bound in $\Omega^{-}$ is given by \revblue{ Theorem \ref{bestapproximationthm} and \eqref{lemma1}} 
\begin{equation*}
\|\rho^- \nabla(u-u_h)^{-}\|_{L^2(\Omega^-)} \le C h   \revblue{(\rho^+ \| D^2 u\|_{L^2(\Omega^+)}+ \rho^- \|D^2 u\|_{L^2(\Omega^-)} )}. 
\end{equation*}
Thus, it remains to prove
\begin{equation}\label{rhoperror}
\|\rho^+ \nabla(u-u_h)^{+}\|_{L^2(\Omega^+)} \le C\, h   \revblue{(\rho^+ \| D^2 u\|_{L^2(\Omega^+)}+ \rho^- \|D^2 u\|_{L^2(\Omega^-)} )}. 
\end{equation}

Consider the function $v_h = (I_h u - u_h)^{+}\in V_h^{+}$. If $\Omega^+$ is the inclusion, i.e., $\partial \Omega^{+}\cap\partial \Omega = \emptyset$, we redefine $v_h$ so that it has average zero on $\Omega^{+}$, i.e., $v_h = v_h -\frac{1}{|\Omega^{+}|}\int_{\Omega^{+}} v_h $. Either case Poincare's inequality holds:
\begin{equation*}\label{Poincare}
\|v_h\|_{H^{1}(\Omega^{+})}\leq C \|\nabla v_h\|_{L^{2}(\Omega^{+})}=C \|\nabla (I_h u - u_h)^{+}\|_{L^{2}(\Omega^{+})},
\end{equation*}
for a constant $C>0$. Applying Lemma \ref{Discreteextensionlmm} it follows that there exists an extension $E v_h \in V_h^{c}$ such that  $Ev_h = v_h$ in $\Omega_h^{+}$, and $\|Ev_h\|_{H^{1}(\Omega)}\leq C \|v_h\|_{H^1(\Omega_h^{+})}$. Combining this estimate with Lemma \ref{boundextensionlmm}, we have
\begin{equation}\label{boundextension}
\|Ev_h\|_{H^{1}(\Omega)}\leq C\Big( \|\nabla(I_h u - u_h)^{+}\|_{L^2(\Omega^{+})}+\Big(\sum_{e\in\EhGp}|e|\,\|\llbracket \nabla (I_h u - u_h)^{+}\rrbracket\|^{2}_{L^{2}(e)}\Big)^{1/2}\Big).
\end{equation}
Consequently, using $((E v_h)^-, (E v_h)^+)  \in V_h$ as test function (where $(E v_h)^\pm := Ev_h|_{\Omega_h^\pm}$), and the consistency of the method \eqref{fem}-\eqref{a_h}, we obtain
\begin{alignat*}{1}
 0&=a_h(u-u_h, Ev_h)\\
 &=\int_{\Omega^+} \rho^+ \nabla(u-u_h)^{+}\cdot \nabla (Ev_h)^{+}+\int_{\Omega^-} \rho^- \nabla(u-u_h)^{-} \cdot \nabla (Ev_h)^{-}\\
&+ \int_{\Gamma} \rho^- \nabla Ev_h^- \cdot \n{n}^-  [u-u_h]ds \\
&+ \sum_{e \in \EhGm} {|e|} \int_{e} \rho^-  \llbracket \nabla (Ev_h)^{-}\rrbracket \llbracket \nabla (u-u_h)^- \rrbracket \,ds  \\
&+ \sum_{e \in \EhGp} {|e|} \int_{e} \rho^+  \llbracket \nabla (Ev_h)^{+}\rrbracket \llbracket \nabla (u-u_h)^+ \rrbracket \,ds.
\end{alignat*}
Here we used that $E v_h$ is continuous across the interface $\Gamma$. Multiplying by $\rho^{+}$ and adding and subtracting $I_h u^{+}$ in the first and last term give
\begin{alignat*}{1}
&\| \rho^+ \nabla(I_h u-u_h)^{+}\|_{L^2(\Omega^+)}^2+ \sum_{e \in \EhGp} {|e|}\, \| \rho^{+}\llbracket \nabla (I_hu-u_h)^+\rrbracket  \|_{L^2(e)}^2=  \\
&-\rho^+ \int_{\Omega^+} \rho^+ \nabla(u-I_h u)^{+} \cdot \nabla (E v_h)^{+} - \rho^+ \sum_{e \in \EhGp} {|e|} \int_{e} \rho^+ \llbracket \nabla (Ev_h)^{+} \rrbracket \llbracket \nabla (u-I_h u)^+ \rrbracket  \,ds  \\
&-\rho^+ \int_{\Omega^-} \rho^- \nabla(u-u_h)^{-} \cdot \nabla (Ev_h)^{-} -\rho^+ \int_{\Gamma} \rho^- \nabla (Ev_h)^{-} \cdot \n{n}^-  [u-u_h]ds \\
&-\rho^+ \sum_{e \in \EhGm} {|e|} \int_{e} \rho^- \llbracket \nabla   (Ev_h)^{-} \rrbracket  \llbracket \nabla (u-u_h)^-\rrbracket  \,ds  =:I_1+I_2,
\end{alignat*}
where $I_1$ denotes the first two terms and $I_2$ the last three terms. First we bound $I_1$. Applying Cauchy-Schwarz and arithmetic geometric inequalities, and using that $\nabla (Ev_h)^{+} = \nabla (I_h u -u_h)^{+}$ in  $\Omega_h^+$ yield
\begin{alignat*}{1}
2|I_1|\le &    \| \rho^+ \nabla(I_h u-u)^{+}\|_{L^2(\Omega^+)}^2+ \sum_{e \in \EhGp} {|e|} \, \|\rho^{+} \llbracket \nabla  (I_hu-u)^+ \rrbracket   \|_{L^2(e)}^2  \\
&+ \| \rho^+ \nabla(I_h u-u_h)^{+}\|_{L^2(\Omega^+)}^2+ \sum_{e \in \EhGp} {|e|} \, \|\rho^{+} \llbracket \nabla  (I_hu-u_h)^+ \rrbracket   \|_{L^2(e)}^2\\
\le &    \| \rho^+ \nabla(I_h u-u)^{+}\|_{L^2(\Omega^+)}^2+ \sum_{e \in \EhGp} {|e|} \, \|\rho^{+} \llbracket \nabla  (I_hu-u)^+ \rrbracket   \|_{L^2(e)}^2  +|I_1| + |I_2|,
\end{alignat*}
\revblue{ which implies that
\begin{equation}\label{I1I2firstestimate}
|I_1|\leq  \| \rho^+ \nabla(I_h u-u)^{+}\|_{L^2(\Omega^+)}^2+ \sum_{e \in \EhGp} {|e|} \, \| \rho^+ \llbracket \nabla (I_hu-u)^+ \rrbracket  \|_{L^2(e)}^2 + |I_2|. 
\end{equation} }
Similarly, we apply Cauchy-Schwarz inequality for the three terms in $I_2$ after taking as common factor $\rho^{+}\sqrt{\rho^{-}}$, i.e.
\begin{alignat*}{1}
&\left|\sqrt{\rho^{-}} \int_{\Omega^-} \nabla(u-u_h)^{-} \cdot \nabla (Ev_h)^{-} dx \right|\le \|\sqrt{\rho^{-}}\nabla(u-u_h)^{-}\|_{L^{2}(\Omega^{-})}\|\nabla (E v_h)^{-}\|_{L^{2}(\Omega^{-})}, \\
&\left|\sqrt{\rho^{-}} \int_{\Gamma} \nabla (Ev_h)^- \cdot \n{n}^-  [u-u_h]ds\right| \le  \sum_{T\in\ThG}\frac{1}{\sqrt{h_T}} \|\sqrt{\rho^{-}}\left[u-u_h\right]\|_{L^{2}(T_{\Gamma})}\sqrt{h_T}\|\nabla (E v_h)^{-}\cdot \n{n}^{-}\|_{L^{2}(T_{\Gamma})}, \\
&\Big|\sqrt{\rho^{-}} \sum_{e \in \EhGm} |e|\, \int_{e} \llbracket \nabla  (Ev_h)^- \rrbracket  \llbracket \nabla (u-u_h)^-\rrbracket ds \Big|\le \sum_{e \in \EhGm} |e|\|\sqrt{\rho^{-}}\llbracket \nabla(u-u_h)^{-}\rrbracket \|_{L^{2}(e)}\|\llbracket\nabla (E v_h)^{-}\rrbracket \|_{L^{2}(e)}.
\end{alignat*}
Observe that terms involving $u-u_h$ are all bounded by $\|u-u_h\|_V$. Hence,
\begin{alignat*}{1}
|I_2| \le  C\, &\rho^+ \sqrt{\rho^-} \|u-u_h\|_{V} \Big( \| \nabla (E v_h)^{\revblue{-}}\|_{L^2(\Omega^{\revblue{-}})} \\
&+\big(\sum_{T \in \ThG} h_T \|\nabla (E v_h)^{-}\cdot \n{n}^{-}\|_{L^2(\TG)}^2\big)^{1/2}+ \big(\sum_{e \in \EhGm}|e|\, \|\llbracket \nabla (E v_h)^{-} \rrbracket\|_{L^2(e)}^2\big)^{1/2}\Big).
\end{alignat*}
\revblue{For the terms involving $E v_h$ we use the argument in the proof of Proposition \ref{propositiongeom} to bound term in $T_{\Gamma}$ by terms in a region of size $h_{T}^{2}$. In addition, applying inverse inequalities we obtain}
\begin{equation*}
\big(\sum_{T \in \ThG} h_T \|\nabla (E v_h)^{-}\cdot \n{n}^{-}\|_{L^2(\TG)}^2\big)^{1/2}+ \big(\sum_{e \in \EhGm}|e| \,\|\llbracket \nabla (E v_h)^{-} \rrbracket\|_{L^2(e)}^2\big)^{1/2}
\le C \| \nabla E v_h\|_{L^2(\Omega)}.
\end{equation*}
Thus, \revblue{by estimate \eqref{I1I2firstestimate} and the estimate above  for $I_2$, it follows that}
\begin{alignat*}{1}
\revblue{|I_1|+|I_2|}\le &  \| \rho^+ \nabla(I_h u-u)^{+}\|_{L^2(\Omega^+)}^2+ \sum_{e \in \EhGp} {|e|} \, \| \rho^+ \llbracket \nabla (I_hu-u)^+ \rrbracket  \|_{L^2(e)}^2  \\
&+ C\, \sqrt{\rho^-} \|u-u_h\|_V  \|\rho^{+}\nabla E v_h\|_{L^2(\Omega)}.
\end{alignat*}
By approximation properties of the Scott-Zhang interpolant we have
\begin{alignat}{1}
\| \rho^+ \nabla(I_h u-u)^{+}\|_{L^2(\Omega^+)}^2&+ \sum_{e \in \EhGp} {|e|}  \| \rho^+ \llbracket \nabla (I_hu-u)^+ \rrbracket  \|_{L^2(e)}^2 \label{rhoI}\\ &\le   C h^2(\rho^{+})^2 \|D^2 u^+\|^2_{L^2(\Omega^{+})}. \nonumber
\end{alignat}
Using the error estimate   \revblue{ Theorem \ref{bestapproximationthm} and \eqref{lemma1}}  and definitions of $I_1$ and $I_2$ we conclude
\begin{alignat*}{1}
\|\rho^+ \nabla(I_h u-u_h)^{+}\|_{L^2(\Omega^+)}^2+ &\sum_{e \in \EhGp} {|e|}\,  \| \rho^+ \llbracket \nabla (I_hu-u_h)^+ \rrbracket  \|_{L^2(e)}^2  \\
\leq\,&C (h  \|\rho^+ \nabla E v_h\|_{L^2(\Omega)} \revblue{(\rho^+ \| D^2 u\|_{L^2(\Omega^+)}+ \rho^- \|D^2 u\|_{L^2(\Omega^-)} )} \\
       &\quad + h^2 \revblue{ (\rho^{+})^2 \|D^2 u^+\|^2_{L^2(\Omega^{+})}}).
\end{alignat*}
Therefore, applying \eqref{boundextension} we have
\begin{equation*}
\|\rho^+ \nabla(I_h u-u_h)\|_{L^2(\Omega^+)}^2+ \sum_{e \in \EhGp} {|e|} \, \| \rho^+ \llbracket \nabla (I_hu-u_h)^+ \rrbracket  \|_{L^2(e)}^2  \le Ch^2  \revblue{(\rho^+ \| D^2 u\|_{L^2(\Omega^+)}+ \rho^- \|D^2 u\|_{L^2(\Omega^-)} )^2}  .
\end{equation*}
Finally, applying triangle inequality, the estimate for the interpolation error \eqref{rhoI}, and last inequality we obtain the desired bound \eqref{rhoperror}.
\end{proof}

\revblue{We can now use the elliptic regularity result to prove the following corollary.} 
\begin{corollary}
\revblue{Assuming the hypothesis of Theorem \ref{Maintheorem} and in addition assuming that our domain is such that  \eqref{prop1eqn} holds, then }
\begin{equation*}
\revblue{ \|\rho\nabla(u-u_h)\|_{L^{2}(\Omega)} \leq C h  \|f\|_{L^2(\Omega)}.}
\end{equation*}
 
\end{corollary}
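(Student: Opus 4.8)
The plan is to combine the main theorem with the assumed elliptic regularity bound in the most direct way possible; no new machinery is needed. First I would apply Theorem \ref{Maintheorem}, whose hypotheses are met here: the standing assumption \eqref{prop1eqn} guarantees that the weighted second-derivative quantity $\rho^+\|D^2 u\|_{L^2(\Omega^+)}+\rho^-\|D^2 u\|_{L^2(\Omega^-)}$ is finite, since it is controlled by $\|f\|_{L^2(\Omega)}$, so the boundedness required by the theorem holds. This yields
\[
\|\rho\nabla(u-u_h)\|_{L^{2}(\Omega)}\leq C h \left(\rho^+\|D^2 u\|_{L^2(\Omega^+)}+\rho^-\|D^2 u\|_{L^2(\Omega^-)}\right).
\]

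Next I would observe that the factor in parentheses is exactly the left-hand side of the elliptic regularity estimate \eqref{prop1eqn}, which is valid precisely because the domain is assumed to be of the type for which \eqref{prop1eqn} holds, as stated in the corollary's hypotheses. Substituting \eqref{prop1eqn} into the displayed bound and absorbing the regularity constant into $C$ immediately gives $\|\rho\nabla(u-u_h)\|_{L^{2}(\Omega)}\leq C h \|f\|_{L^2(\Omega)}$, which is the claim.

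Since both ingredients are already established, there is essentially no obstacle: the corollary is a one-step substitution. The only point worth noting is the internal consistency mentioned above, namely that \eqref{prop1eqn} simultaneously supplies the finiteness hypothesis of Theorem \ref{Maintheorem} and the quantitative bound used to replace the second-derivative term by $\|f\|_{L^2(\Omega)}$; this is immediate because $f\in L^2(\Omega)$.
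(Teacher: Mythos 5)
Your proposal is correct and is exactly the argument the paper intends: apply Theorem \ref{Maintheorem} and then substitute the elliptic regularity bound \eqref{prop1eqn} to replace $\rho^+\|D^2u\|_{L^2(\Omega^+)}+\rho^-\|D^2u\|_{L^2(\Omega^-)}$ by $C\|f\|_{L^2(\Omega)}$. The paper leaves this one-step substitution implicit, and your additional remark that \eqref{prop1eqn} also supplies the finiteness hypothesis of the theorem is a correct and harmless clarification.
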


We conclude this section by stating an $L^2$-error estimate.  The proof follows easily from a duality argument (see \cite{GSS15}). We omit the details.
\begin{lemma}($L^2$ error estimate)
Assuming the hypothesis of \revblue{previous  corollary}  we have
\begin{equation*}
\|(u-u_h)^-\|_{L^2(\Omega^-)}+\|(u-u_h)^+\|_{L^2(\Omega^+)} \le \frac{C}{\rho^{-}} h^2 \|f\|_{L^2(\Omega)}.
\end{equation*}
\end{lemma}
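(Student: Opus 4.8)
The plan is to run a standard duality (Aubin--Nitsche) argument while carefully tracking the powers of $\rho^-$, exactly as in the energy estimate but one order higher in $h$. Write $e=u-u_h$ with $e^{\pm}=(u-u_h)^{\pm}$, and introduce the dual interface problem: find $w=(w^{-},w^{+})$ with $-\nabla\cdot(\rho^{\pm}\nabla w^{\pm})=e^{\pm}$ in $\Omega^{\pm}$, $w=0$ on $\partial\Omega$, and $[w]=0$, $[\rho\nabla w\cdot\n{n}]=0$ on $\Gamma$. This is a problem of the same type as \eqref{Problem} with data $e$, so the elliptic regularity bound \eqref{prop1eqn} applies and gives $\rho^{+}\|D^2 w\|_{L^2(\Omega^{+})}+\rho^{-}\|D^2 w\|_{L^2(\Omega^{-})}\le C\|e\|_{L^2(\Omega)}$.

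First I would establish the adjoint consistency identity $a_h(\phi,w)=(\phi^{+},e^{+})_{\Omega^{+}}+(\phi^{-},e^{-})_{\Omega^{-}}$ for every $\phi$ with $\phi^{\pm}\in H^1(\Omega^{\pm})$ vanishing on $\partial\Omega$. Indeed, in $a_h(\phi,w)$ the penalty term and the Nitsche term $\rho^{-}\nabla\phi^{-}\cdot\n{n}^{-}[w]$ vanish because $[w]=0$, and the two stabilization sums vanish because each $w^{\pm}\in H^2$ is $C^1$ across interior edges so that $\llbracket\nabla w^{\pm}\rrbracket=0$. Integrating the remaining volume terms $\int_{\Omega^{\pm}}\rho^{\pm}\nabla\phi^{\pm}\cdot\nabla w^{\pm}$ by parts and inserting $-\nabla\cdot(\rho^{\pm}\nabla w^{\pm})=e^{\pm}$ produces the two $L^2$ products together with the interface contribution $\int_{\Gamma}(\phi^{+}\rho^{+}\nabla w^{+}\cdot\n{n}^{+}+\phi^{-}\rho^{-}\nabla w^{-}\cdot\n{n}^{-})$; the flux continuity $\rho^{+}\nabla w^{+}\cdot\n{n}^{+}=-\rho^{-}\nabla w^{-}\cdot\n{n}^{-}$ rewrites this as $-\int_{\Gamma}\rho^{-}\nabla w^{-}\cdot\n{n}^{-}[\phi]$, which cancels exactly against the surviving Nitsche term $\int_{\Gamma}\rho^{-}\nabla w^{-}\cdot\n{n}^{-}[\phi]$. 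Taking $\phi=e$ yields $\|e\|_{L^2(\Omega)}^2=a_h(e,w)$, and since $I_h w\in V_h$, Galerkin orthogonality gives $\|e\|_{L^2(\Omega)}^2=a_h(e,w-I_h w)$.

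Next I would apply the continuity estimate of Lemma \ref{lemma:continuity} with both arguments in the augmented norm (note $e^{\pm}$ and $(w-I_h w)^{\pm}$ lie in $H_h^2(\Omega_h^{\pm})$), obtaining $\|e\|_{L^2(\Omega)}^2\le C\,\|e\|_{V_A}\,\|w-I_h w\|_{V_A}$, and bound each factor. For the dual factor, Lemma \ref{interpolationerrorlmm} and the regularity bound above give $\|w-I_h w\|_{V_A}\le Ch(\sqrt{\rho^{+}}\|D^2 w\|_{L^2(\Omega^{+})}+\sqrt{\rho^{-}}\|D^2 w\|_{L^2(\Omega^{-})})\le \tfrac{Ch}{\sqrt{\rho^{-}}}\|e\|_{L^2(\Omega)}$, where the last step uses $\sqrt{\rho^{\pm}}\|D^2 w\|_{L^2(\Omega^{\pm})}=(\rho^{\pm})^{-1/2}(\rho^{\pm}\|D^2 w\|_{L^2(\Omega^{\pm})})$ and $\rho^{+}\ge\rho^{-}$. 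For the primal factor I would split $e=(u-I_h u)+(I_h u-u_h)$: Lemma \ref{interpolationerrorlmm} with \eqref{prop1eqn} gives $\|u-I_h u\|_{V_A}\le \tfrac{Ch}{\sqrt{\rho^{-}}}\|f\|_{L^2(\Omega)}$, while for the piecewise-linear remainder $I_h u-u_h\in V_h$ the extra augmented-norm term in \eqref{augmentednorm} is absorbed into the $V$-norm by the trace/inverse-inequality edge-chaining argument of Lemma \ref{lemma:coercive}, so that $\|I_h u-u_h\|_{V_A}\le C\|I_h u-u_h\|_{V}$; combining with Corollary \ref{cor1} yields $\|e\|_{V_A}\le \tfrac{Ch}{\sqrt{\rho^{-}}}\|f\|_{L^2(\Omega)}$.

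Multiplying the two bounds gives $\|e\|_{L^2(\Omega)}^2\le \tfrac{Ch^2}{\rho^{-}}\|f\|_{L^2(\Omega)}\,\|e\|_{L^2(\Omega)}$, and dividing by $\|e\|_{L^2(\Omega)}$ produces the claimed estimate. The main obstacle is controlling the augmented-norm term $\sum_{T\in\mathcal{T}_h^{\Gamma}}h_T\|\sqrt{\rho^{-}}\nabla e^{-}\cdot\n{n}^{-}\|_{L^2(T_\Gamma)}^2$, which is absent from the energy estimate of Corollary \ref{cor1}: for the discrete part $I_h u-u_h$ this cannot be handled by a naive trace inequality on an arbitrarily cut element, and must instead be passed to a fully-resolved element through the nodal-patch chaining of Proposition \ref{propositiongeom}, exactly as in the coercivity proof, to keep the constant independent of how $\Gamma$ meets the mesh and of $\rho^{\pm}$.
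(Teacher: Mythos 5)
Your proposal is correct and follows exactly the route the paper indicates: the paper omits the details and simply points to the duality (Aubin--Nitsche) argument of \cite{GSS15}, which is what you carry out, including the correct handling of the adjoint consistency, the $\rho^-$-scaling of both factors, and the absorption of the extra augmented-norm term for the discrete part via the chaining argument of Proposition \ref{propositiongeom}. No gaps; this is a faithful, complete version of the proof the authors chose not to write out.
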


\section{\revblue{Extensions of the method}}\label{extensions}

In this section we discuss and state some straightforward extensions of method \eqref{fem}-\eqref{a_h}.
\subsection{Non homogeneous jump conditions}
We consider problem \eqref{Problem} with a non homogeneous jump conditions in equations \eqref{Problem:c} and \eqref{Problem:d}, i.e.
\begin{subequations}\label{Problemflux}
\begin{alignat}{3}
    - \nabla\cdot(\rho^{\pm} \nabla u^{\pm}) &= f^{\pm} \qquad &\mbox{in } &\Omega^{\pm},  \\
                                           u^{\pm} &=0            &\mbox{on }&\partial \Omega^{\pm}\backslash\Gamma, \\
                            \left[u\right]&=\alpha            &\mbox{on }&\Gamma,  \\
  \left[\rho \nabla  u\cdot \n{n}  \right]&=\beta           &\mbox{on }&\Gamma,
\end{alignat}
\end{subequations}
where $\alpha, \beta $ are a smooth functions given on the interface. The method in this case follows from a standard derivation of the variational formulation:
\begin{alignat}{1}
a_h(u_h, v)&=(f^{+},v^{+})_{\Omega^+}+(f^{-},v^{-})_{\Omega^-}+ \int_{\Gamma} \big(\beta v^{+} +\rho^{-}\nabla v^{-}\cdot\n{n}^{-} \alpha\big)ds\label{femnonzeroflux} \\& +\sum_{T\in\ThG} \frac{\gamma}{h_T}\rho^{-} \int_{T_\Gamma} [v]\alpha ds \nonumber,
\end{alignat}
for all $v\in V_h$.

All the proofs would generalize easily. \revblue{In particular, the main result Theorem \ref{Maintheorem} holds}.  \revblue{ However, if one wants a result in terms of the data of the problem (e.g. $f$, $\beta$ and $\alpha$) then one would need a regularity result like the one   in \cite{MR2684351} for this more complicated problem which does not seem to appear in the literature.}

\subsection{Three dimensional problem}
To extend the method in three dimensions is straightforward. In order to prove the same result one needs the regularity results from \cite{MR2684351} to hold in three dimensions which we have not found in the literature. Moreover, we need to be able to prove the geometric result and extension results in three dimensions: Proposition \ref{propositiongeom} and Lemma \ref{Discreteextensionlmm}. We believe that these results should hold.

\section{Numerical examples}\label{sectionnumerical}
This section illustrates numerically the a priori error estimates proved in Section \ref{section3} and  Section \ref{section4}. We consider a two dimensional example with a non trivial immersed and closed interface. In particular, the example supports the optimal order of convergence for the error of the flux $\rho\nabla(u-u_h)$. We summarize our experimental results in tables, displaying the following errors and experimental orders of convergence (eoc):
\begin{align*}
e_h^0\,\,& :=\,\, \|u -  u_h\|_{L^2(\Omega)},&  e_h^\infty\,\,& :=\,\, \|u - u_h\|_{L^\infty(\Omega)},   \\
e_{h,\rho}^1\,\,& :=\,\, \|\rho(\nabla u - \nabla u_h)\|_{L^2(\Omega)},&
e_{h,\rho}^{1,\infty}\,\,& :=\,\, \|\rho(\nabla u -  \nabla u_h)\|_{L^\infty(\Omega)} ,
\end{align*}
\[ \mathrm{eoc}(e) \,\,:=\,\, \frac{\log(e_{h_{l+1}}/e_{h_l})}{ \log(h_{l+1}/h_{l})}.\]
Our theoretical results predict optimal convergence of: error $e_h^0$ (second order), and error $e_h^1$ (first order).  We also test the convergence of the errors $ e_h^\infty$ and $e_h^{1,\infty}$.

The finite element approximation by scheme \eqref{fem}-\eqref{a_h} is computed with a sequence of uniform triangulations non matching the interface. The parameter of the triangulation is given by $h = 2^{-(l+3/2)}$, for $l = 1,\,...,\,7$. Computations were performed in MATLAB including the solution of the linear system by means of command "$\backslash$".

\begin{enumerate}[label=\bfseries (\arabic*)]
\item\label{Ex1} Consider problem \eqref{Problem} in a square domain $\Omega = (-1,1)^2$ and an immersed interface $\Gamma = \{ x \in\Omega : x_1^2+x_2^2 = (1/3)^2\}$. We  will test both cases: $\Omega^{-}$ as inclusion, and $\Omega^{+}$ as inclusion. Consider the following exact solution:
\begin{equation}\label{exactsolution1}
    u(x)\,\,=\,\, \left\{
                        \begin{array}{ll}
                          \frac{r^\alpha}{\rho^-} &, \hbox{ if } x\in \Omega^-, \\
                          \frac{r^\alpha}{\rho^+} +(1/3)^\alpha(\frac{1}{\rho^-} - \frac{1}{\rho^+}) &,  \hbox{ if } x\in \Omega^+, \
                        \end{array}
                      \right.
\end{equation}
where $r = \sqrt{x_1^2+x_2^2}$ and $\alpha = 2$. We test with values of the diffusion coefficients $\rho^+ = 10^4$ and $\rho^-=1$.  Tables \ref{Table:Ex1_1} and  \ref{Table:Ex1_2} summarize the results obtained by method \eqref{fem}-\eqref{a_h} with stabilization parameters $\gamma = 10$ and $\gamma_g^\pm=10$, with $\Omega^{-}$ and $\Omega^{+}$  as inclusion, respectively.

\begin{table}[!htp]
\footnotesize
\ra{1.1}
\begin{center}
\begin{tabular}{@{}l@{\hskip .3in}c@{\hskip .2in}c@{\hskip .4in}c@{\hskip .2in}c@{\hskip .4in}c@{\hskip .2in}c@{\hskip .4in}c@{\hskip .2in}c@{}}\toprule
$l$ & $e_h^0$ & eoc & $e_h^\infty$ & eoc  & $e_{h,\rho}^1$ & eoc & $e_{h,\rho}^{1,\infty}$ & eoc \\ \midrule
1   &  1.9e-02  &          &  5.5e-02  &          &   3.7e-01  &         &  5.9e-01 &       \\
2   &  6.2e-03  &   1.64  &  1.5e-02  &   1.86  &   1.4e-01  &  1.38  &  2.7e-01  &  1.11 \\
3   &  1.1e-03  &   2.46  &  2.8e-03  &   2.45  &   5.6e-02  &  1.31  &  1.2e-01  &  1.17 \\
4   &  1.7e-04  &   2.69  &  5.0e-04  &   2.48  &   2.6e-02  &  1.11  &  5.4e-02  &  1.17 \\
5   &  2.8e-05  &   2.66  &  9.8e-05  &   2.34  &   1.3e-02  &  1.03  &  2.4e-02  &  1.14 \\
6   &  4.6e-06  &   2.59  &  1.9e-05  &   2.39  &   6.4e-03  &  1.01  &  1.2e-02  &  1.03 \\
7   &  8.4e-07  &   2.45  &  4.2e-06  &   2.16  &   3.2e-03  &  1.00  &  6.0e-03  &  1.01 \\
 \bottomrule
  \end{tabular}
\end{center}
 \vskip3mm

\caption{Example \ref{Ex1}: errors and experimental orders of convergence (eoc) for problem \eqref{Problem} and solution \eqref{exactsolution1} with $\Omega^{-} = \{ x\in\Omega: x_1^2+x_2^2<(1/3)^2\}$ and $\Omega^{+} = \Omega\backslash (\Omega^{-}\cup\Gamma)$.}
\label{Table:Ex1_1}
\end{table}
\begin{table}[!htp]
\footnotesize
\ra{1.1}
\begin{center}
\begin{tabular}{@{}l@{\hskip .3in}c@{\hskip .2in}c@{\hskip .4in}c@{\hskip .2in}c@{\hskip .4in}c@{\hskip .2in}c@{\hskip .4in}c@{\hskip .2in}c@{}}\toprule
$l$ & $e_h^0$ & eoc & $e_h^\infty$ & eoc  & $e_{h,\rho}^1$ & eoc & $e_{h,\rho}^{1,\infty}$ & eoc \\ \midrule
1   &  3.7e-02  &         &  5.9e-02  &         &   3.6e-01  &        &  6.0e-01 &       \\
2   &  8.3e-03  &   2.15  &  1.3e-02  &   2.23  &   1.4e-01  &  1.41  &  2.7e-01  &  1.15 \\
3   &  1.5e-03  &   2.45  &  2.9e-03  &   2.09  &   5.6e-02  &  1.29  &  1.2e-01  &  1.17 \\
4   &  2.7e-04  &   2.50  &  5.8e-04  &   2.35  &   2.6e-02  &  1.09  &  5.3e-02  &  1.18 \\
5   &  5.2e-05  &   2.35  &  9.8e-05  &   2.56  &   1.3e-02  &  1.02  &  2.4e-02  &  1.14 \\
6   &  1.2e-05  &   2.16  &  2.1e-05  &   2.25  &   6.4e-03  &  1.01  &  1.2e-02  &  1.01 \\
7   &  2.8e-06  &   2.06  &  4.2e-06  &   2.29  &   3.2e-03  &  1.00  &  5.8e-03  &  1.05 \\
 \bottomrule
  \end{tabular}
\end{center}
 \vskip3mm

\caption{Example \ref{Ex1}: errors and experimental orders of convergence (eoc)  for problem \eqref{Problem} and solution \eqref{exactsolution1} with $\Omega^{+} = \{ x\in\Omega: x_1^2+x_2^2<(1/3)^2\}$ and $\Omega^{-} = \Omega\backslash (\Omega^{+}\cup\Gamma)$.}
\label{Table:Ex1_2}
\end{table}
Optimal convergence of the errors, second order for $e_h^0$ and $e_h^\infty$ and first order for $e_{h,\rho}^1$ and $e_{h,\rho}^\infty$, for Example \ref{Ex1} is observed in Tables \ref{Table:Ex1_1} and \ref{Table:Ex1_2} for both cases $\Omega^{-}$ and $\Omega^{+}$ as inclusion.

In addition, in order to test the independence of the coefficient $\rho^{\pm}$ of our estimate in Theorem \ref{Maintheorem}, we consider the exact solution \eqref{exactsolution1} with a fix mesh corresponding to $h = 2^{-(5+3/2)}$. We compute errors $e_h^0$, $e_{h,\rho}^1$ and $e_{h,\sqrt{\rho}}^1 = \|\sqrt{\rho}(u-u_h)\|_{L^2(\Omega)}$ for decreasing values of $\rho^{-}$ and increasing values of $\rho^+$. We summarize the results in Tables \ref{Table:Ex1_3} and \ref{Table:Ex1_4}, corresponding to $\Omega^{-}$ as inclusion and $\Omega^{+}$ as inclusion, respectively.

\begin{table}[!htp]
\footnotesize
\ra{1.1}
\begin{center}
\begin{tabular}{@{}l@{\hskip .2in}l@{\hskip .4in}c@{\hskip .4in}c@{\hskip .4in}c@{}}\toprule
$\rho^{-}$ & $\rho^{+}$ &   $e_h^0$    &   $e_{h,\rho}^1$  &$e_{h,\sqrt{\rho}}^1$\\ \midrule
1e+00      &  1e+01     &   3.0e-05    &   1.3e-02  &     5.5e-03 \\
1e-01      &  1e+02     &   2.8e-04    &   1.3e-02  &     1.3e-02 \\
1e-02      &  1e+03     &   2.8e-03    &   1.3e-02  &     4.0e-02 \\
1e-03      &  1e+04     &   2.8e-02    &   1.3e-02  &     1.3e-01 \\
1e-04      &  1e+05     &   2.8e-01    &   1.3e-02  &     4.0e-01 \\
 \bottomrule
  \end{tabular}
\end{center}
 \vskip3mm
\caption{Example \ref{Ex1}: errors for mesh parameter $h = 2^{-(5+3/2)}$. $\Omega^{-} = \{ x\in\Omega: x_1^2+x_2^2<(1/3)^2\}$ and $\Omega^{+} = \Omega\backslash (\Omega^{-}\cup\Gamma)$.}
\label{Table:Ex1_3}
\end{table}
\begin{table}[!htp]

\footnotesize
\ra{1.1}
\begin{center}
\begin{tabular}{@{}l@{\hskip .2in}l@{\hskip .4in}c@{\hskip .4in}c@{\hskip .4in}c@{}}\toprule
$\rho^{-}$ & $\rho^{+}$ & $e_h^0$    &    $e_{h,\rho}^1$   &   $e_{h,\sqrt{\rho}}^1$   \\ \midrule
1e+00      &  1e+01      &   5.4e-05  &     1.3e-02         &     1.2e-02      \\
1e-01      &  1e+02      &   5.2e-04  &     1.3e-02         &     3.9e-02      \\
1e-02      &  1e+03      &   5.2e-03  &     1.3e-02         &     1.2e-01      \\
1e-03      &  1e+04      &   5.3e-02  &     1.3e-02         &     3.9e-01      \\
1e-04      &  1e+05      &   3.9e-01  &     1.3e-02         &     1.2e+00      \\
 \bottomrule
  \end{tabular}
\end{center}
 \vskip3mm

\caption{Example \ref{Ex1}: errors for mesh parameter $h = 2^{-(5+3/2)}$. $\Omega^{+} = \{ x\in\Omega: x_1^2+x_2^2<(1/3)^2\}$ and $\Omega^{-} = \Omega\backslash (\Omega^{+}\cup\Gamma)$.}
\label{Table:Ex1_4}
\end{table}

Tables \ref{Table:Ex1_3} and \ref{Table:Ex1_4} show that error $e_{h,\rho}^1$ is practically invariant, corroborating that in the main result of our paper Theorem \ref{Maintheorem}, the estimate is totally independent of the diffusion coefficients $\rho^{\pm}$. Errors $e_h^0$ and $e_{h,\sqrt{\rho}}^1$ seems to be dependent of the coefficients as our estimates in Section \ref{section3} show.
\revblue{
\item\label{ExampleNitschesflower}
Consider the two dimensional domain $\Omega = (-1,1)^2$ with the immersed interface $\Gamma$ defined by $\Gamma = \{x = (x_1,x_2)\in \Omega: \|x\|_2 = r = 1/18+0.2\sin(5s),\,s\in[0,2\pi)\}$. We define $\Omega^{-}$ as the interior domain, i.e. $\partial \Omega^{-} = \Gamma$. We further consider the following exact solution
    \begin{equation}\label{exactsolution3}
    u(x)= \left\{
                        \begin{array}{ll}
                          \frac{1}{\rho^-}(x_1^2+x_2^2)^2 &, \hbox{ if } (x,y)\in \Omega^-, \\
                          \frac{1}{\rho^+}x_2\sqrt{x_1^2+x_2^2} &,  \hbox{ if } (x,y)\in \Omega^+, \
                        \end{array}
                      \right.
\end{equation}
We set the stabilization parameters to be: $\gamma = 10$, $\gamma_g^+=10$ and $\gamma_g^-=10$.
Note that in this case the jump of the solution and the jump of the flux are nonzero. Table \ref{Table:Ex3_1} shows the errors and experimental orders of convergence obtained by method \eqref{femnonzeroflux}-\eqref{a_h}.  As in the previous example, we test the independence of the coefficient $\rho^{\pm}$ of our estimate in Theorem \ref{Maintheorem}. We consider the exact solution \eqref{exactsolution3} with a fix mesh corresponding to $h = 2^{-(5+3/2)}$. We compute errors $e_h^0$, $e_{h,\rho}^1$ and $e_{h,\sqrt{\rho}}^1 = \|\sqrt{\rho}(u-u_h)\|_{L^2(\Omega)}$ for decreasing values of $\rho^{-}$ and increasing values of $\rho^+$. We summarize these results in Table \ref{Table:Ex3_2}. }

\revblue{
We observe optimal convergence of the errors, second order for $e_h^0$ and $e_h^\infty$ and first order for $e_{h,\rho}^1$ and $e_{h,\rho}^\infty$, for Example \ref{ExampleNitschesflower} in Table \ref{Table:Ex3_1}. Table \ref{Table:Ex3_2} shows that error $e_{h,\rho}^1$ is practically invariant, supporting our claim that the estimate for the error of the flux is totally independent of the diffusion coefficients $\rho^{\pm}$ for the case of non-homogeneous jumps.
\begin{table}[!htp]
\footnotesize
\ra{1.1}
\begin{center}
\begin{tabular}{@{}l@{\hskip .4in}c@{\hskip .2in}c@{\hskip .4in}c@{\hskip .2in}c@{\hskip .4in}c@{\hskip .2in}c@{\hskip .4in}c@{\hskip .2in}c@{}}\toprule
$l$ & $e_h^0$ & eoc & $e_h^\infty$ & eoc  & $e_{h,\rho}^1$ & eoc & $e_{h,\rho}^{1,\infty}$ & eoc \\ \midrule
1   &  2.4e-2  &   $-$   &  7.1e-2  &   $-$   &   5.1e-1  &  $-$   &  1.0e+0 &    $-$ \\
2   &  1.0e-2  &   1.23  &  2.6e-2  &   1.46  &   2.2e-1  &  1.18  &  7.8e-1  &  0.39 \\
3   &  3.2e-3  &   1.68  &  1.4e-2  &   0.91  &   9.8e-2  &  1.19  &  5.1e-1  &  0.62 \\
4   &  8.6e-4  &   1.91  &  4.4e-3  &   1.63  &   3.7e-2  &  1.41  &  2.9e-1  &  0.84 \\
5   &  1.7e-4  &   2.35  &  1.0e-3  &   2.13  &   1.4e-2  &  1.43  &  1.4e-1  &  1.07 \\
6   &  2.8e-5  &   2.60  &  1.9e-4  &   2.42  &   5.9e-3  &  1.23  &  6.2e-2  &  1.14 \\
 \bottomrule
  \end{tabular}
\end{center}
 \vskip3mm

\caption{Example \ref{ExampleNitschesflower}: errors for mesh parameter $h = 2^{-(5+3/2)}$ , using method \eqref{femnonzeroflux}-\eqref{a_h}.}
\label{Table:Ex3_1}
\end{table}
\begin{figure}[!htp]
\begin{center}
  \includegraphics[height = 5cm, width=6cm]{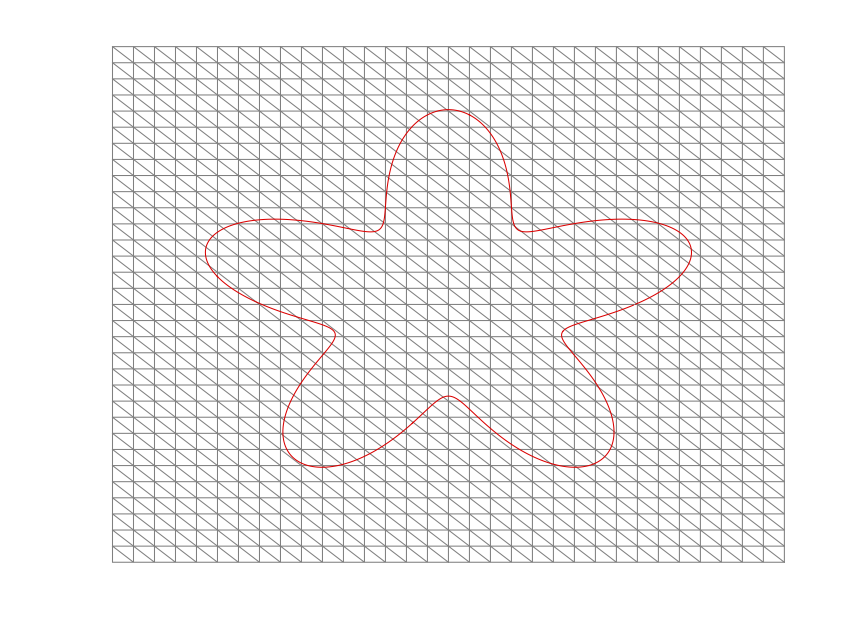}\includegraphics[scale=.5]{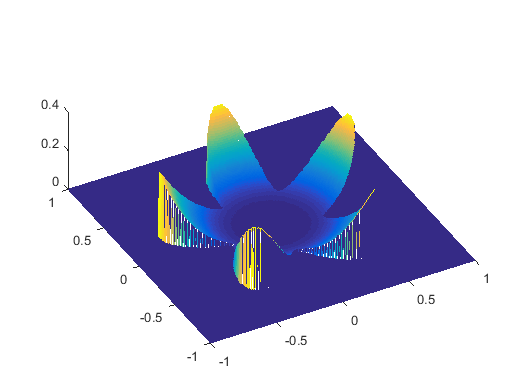}\\
  \caption{Example \ref{ExampleNitschesflower}: Left figure: a non-fitted triangulation of the interface. Right figure: approximate solution by method \eqref{femnonzeroflux}-\eqref{a_h}.}\label{figFlower}
\end{center}
\end{figure}
\begin{table}[!htp]

\footnotesize
\ra{1.1}
\begin{center}
\begin{tabular}{@{}l@{\hskip .2in}l@{\hskip .4in}c@{\hskip .4in}c@{\hskip .4in}c@{}}\toprule
$\rho^{-}$ & $\rho^{+}$ &   $e_h^0$    &   $e_{h,\rho}^1$  &$e_{h,\sqrt{\rho}}^1$\\ \midrule
1e+00      &  1e+01     &   1.6e-04    &   1.4e-02  &     1.1e-02  \\
1e-01      &  1e+02     &   1.7e-03    &   1.4e-02   &     3.4e-02 \\
1e-02      &  1e+03     &   1.7e-02    &   1.4e-02   &     1.1e-01 \\
1e-03      &  1e+04     &   1.7e-01    &   1.4e-02   &     3.4e-01 \\
1e-04      &  1e+05     &   1.7e-00    &   1.4e-02   &     1.1e+00 \\
 \bottomrule
  \end{tabular}
\end{center}
 \vskip3mm

\caption{Example \ref{ExampleNitschesflower}: errors and experimental orders of convergence (eoc) with  $\rho^-=1$, $\rho^+=10^5$, using method \eqref{femnonzeroflux}-\eqref{a_h}.}
\label{Table:Ex3_2}
\end{table}
}
\end{enumerate}

\appendix
\numberwithin{equation}{section}
\section{Proof Proposition \ref{propositiongeom}}\label{proofpropostiongeom}
Consider a node $z$ of the triangulation $\mathcal{T}_h$ and the patch of elements $\Delta_z$ (defined in Proposition \ref{proofpropostiongeom}) associated to $z$. Since we are assuming that $h$ is small enough and the interface is smooth we have that: the interface intersects each edge of triangulation at most once, or the interface coincides with an edge. Therefore, if $z\in \Omclosed$, there exists at least one node $z'\in \overline{\Delta}_{z}$ with $z'\in \Omclosed$, and the edge $\overline{e}$ connecting $z$ and $z'$ is completely contained in $\overline{\Omega}^{-}$. If there exists more than one node satisfying this property then it would follow that an element of the patch of $z$ is fully contained in $\Omega^{-}$. We then prove the remaining case. See Figure \ref{figureproposition} for an illustration of these definitions.
\begin{figure}[htbp]
\begin{center}
\includegraphics[scale=.5]{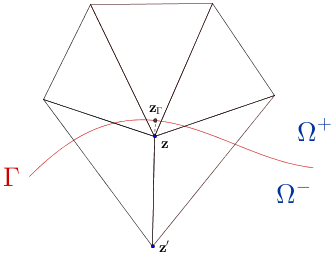}\\
\caption{Appendix \ref{proofpropostiongeom}: Illustration of definitions.}\label{figureproposition}
\end{center}
\end{figure}

Let $z_{\Gamma}$ the point on $\Gamma$ resulting of extending the segment $e$ connecting $z$ and $z'$. Denote by $A_1$ and $A_2$ the regions separated by the segment $e_{\Gamma} = \mathrm{Int}(\overline{z' z_{\Gamma}})$,  such that $A_1\cup A_2\cup e_{\Gamma} = \Delta_{z}\cap \Omega^{-}$. Then, by Lemma 4 in \cite{GSS15}, we have
\begin{equation*}
|e_{\Gamma}|^{2} \leq C \max\{|A_1|,|A_2|\}.
\end{equation*}
Thus, the result follows from $|e|\leq |e^{\Gamma}|$ and shape regularity.

\section{Proof of even extension}\label{proofofevenextension}
In this section we give an explicit construction of an even extension from $\Omega^+$ to $\Omega$. We apply this to continuous functions that are in $H^1(\Omega^+)$. The extension result \eqref{evenExtension} is a well-known result in partial differential equations. We sketch the proof with the aim to obtain  the explicit construction of the extension, which will be used in the proof of Lemma \ref{Discreteextensionlmm}.

For simplicity, assume that $\Omega^+$ is the inclusion (i.e. $\partial \Omega^+ = \Gamma$). For $\epsilon>0 $ define the tubular neighborhood
\begin{equation*}
R_{\epsilon}=\{ x \in \Omega: d(x):=\text{dist}(x, \partial \Omega^+) \le \epsilon\}.
\end{equation*}
Since $\Gamma$ is smooth, and for $\epsilon$ small enough, for each $x \in R_{\epsilon}$ there exists a unique point $x_{\partial\Omega^{+}} \in \partial \Omega^+$  such that
\begin{equation*}
|x-x_{\partial\Omega^{+}}|=\text{dist}(x,\partial \Omega^+).
\end{equation*}
Moreover, define the unit normal vector to $\partial \Omega^{+}$ pointing towards $x$ as: $n(x)=(x-x_{\partial\Omega^{+}})/|x-x_{\partial\Omega^{+}}|$.  Hence, for each $x \in R_{\epsilon}$ we define its ``reflection'' $\widetilde{x} = \widetilde{x}(x)$ by
\begin{equation*}
\widetilde{x} = x_{\partial\Omega^{+}}-d(x) n(x).
\end{equation*}

Now, given $v \in C(\Opclosed)$ we define $\widetilde{G} v  \in C(\Omega^+ \cup R_{\epsilon})$ such that:
\begin{equation*}
\widetilde{G}v(x)= \left\{
                 \begin{array}{ll}
                   v(x), & \hbox{if } x \in \Opclosed  \\
                   v(\widetilde{x}), & \hbox{if } x \in R_{\epsilon} \backslash \Omega^+.
                 \end{array}
               \right.
\end{equation*}
Our extension is complete by considering a cutoff function $\eta \in C_c( \Omega^+ \cup R_{\epsilon})$, such that $\eta  \equiv 1$ on $\Omega^+\cup R_{\epsilon/2}$. Of course, we extend $\eta$ to all of $\Omega$ by zero. Thus, the extension $G v \in C_c(\Omega)$ is defined  as follows
\begin{equation*}
Gv = \eta \widetilde{G}{v}.
\end{equation*}

The estimate is a well-known result.
\begin{equation}\label{evenExtension}
\|G v\|_{H^1(\Omega)} \le C \|v\|_{H^1(\Omega^+)}.
\end{equation}

\section{Proof of discrete extension Lemma \ref{Discreteextensionlmm}}\label{proofdiscreteextension}
We first introduce notation. For each node $x$ of the triangulation $\mathcal{T}_h$ we consider the patch associated to $x$, denoted by $\Delta_{x}$.  Moreover, we consider $\Delta^{-}_{x}$ as the restrictions to $\Omega^{-}$, i.e.,
\begin{equation*}
\Delta^{-}_{x} = \Delta_{x}\cap\Omega^{-}.
\end{equation*}
Considering definitions introduced in Appendix \ref{proofofevenextension}, we define the reflection of $\Delta_x^{-}$
\begin{equation*}
\widetilde{\Delta}^{-}_{x} = \{\widetilde{y}(y): y \in \Delta^{-}_{x}\} \subset \Opclosed.
\end{equation*}

Let $v\in V_{h}^{+}$, then, by \eqref{evenExtension} there exists $Gv\in H^{1}_0(\Omega)$ such that $Gv = v$ in $\Omega^{+}$, and
\begin{equation*}
\|Gv\|_{H^1(\Omega)}\leq C\|v\|_{H^1(\Omega^+)}.
\end{equation*}
We proceed constructing a stable interpolation operator of the extended function $Gv$ onto $V_h^c$, invariant on $V_{h}^{+}$. Define $P_1(Gv)$ for any node $x$ of the triangulation $\mathcal{T}_h$ by
\begin{equation*}
P_1(Gv)(x)= \left\{
                 \begin{array}{ll}
                   v(x), & \hbox{if } x \in \Opclosed,  \\
                   \frac{1}{|\Delta^{-}_{x}|} \int_{\Delta^{-}_{x}}Gv(y)dy , & \hbox{if } x \in \Omega^-,
                 \end{array}
               \right.
\end{equation*}
\revblue{i.e., $P_1$ preserves $v$ in $\overline{\Omega^{+}}$ and is the Cl\'ement interpolant (see \cite{MR0400739}) of $G(v)$ on  $\Omega^{-}$. Then, as a consequence of the definition of $P_1$ we have that}
\begin{equation}\label{P1estimate}
\|P_1(Gv)\|_{H^{1}(\Omega)}\leq C \|Gv\|_{H^{1}(\Omega)}.
\end{equation}
Observe that this definition does not guarantee that $P_1(Gv)$ coincides with $v$ for a node in $\Omega_h^{+}\backslash\Omega^{+}$. Then we need to correct the definition of the interpolant on these nodes. The needed extension is defined as follows
\begin{equation*}
Ev :=P_2(Gv)(x)= \left\{
                 \begin{array}{ll}
                   v(x), & \hbox{if } x \in \overline{\Omega}^{+}_h,  \\
                   \frac{1}{|\Delta^{-}_{x}|} \int_{\Delta^{-}_{x}}Gv(y)dy , & \hbox{if } x \in \Omega\backslash \overline{\Omega}^{+}_h.
                 \end{array}
               \right.
\end{equation*}
Notice that $P_2(Gv)$ and $P_1(Gv)$ agree at every node except the nodes $x$ in the following set
\begin{equation*}
S_h=\{ x\in\Omega_h^{+}\backslash\Omega^{+}: x \text{ is a node of  some triangle } T\in \mathcal{T}_{h}^{\Gamma} \}.
\end{equation*}
Define also the collection of triangles that have a node in $S$.
\begin{equation*}
\mathcal{M}_h=\{T \in \mathcal{T}_h: \text{ at least one of three vertices of } T \text{ belongs to } S_h\}.
\end{equation*}
If we define $e_h=P_2(Gv)-P_1(Gv)$ we see that
\begin{equation*}
\|\nabla e_h\|_{L^2(\Omega)}^2 = \sum_{T \in \mathcal{M}_h} \|\nabla e_h\|_{L^2(T)}^2.
\end{equation*}

For each $T \in \mathcal{M}_h$, let $x_T \in S$ be such that $|e_h(x_T)|=\text{max}_{y \in T} |e_h(y)|$. Then it is simple to show, using inverse estimates, that
\begin{equation*}
\|\nabla e_h\|_{L^2(T)}\le C \, |e_h(x_T)|.
\end{equation*}
By definition we have
\begin{equation*}
|e_{h}(x_T)| = \left|v(x_T)-\frac{1}{|\Delta^{-}_{x_T}|} \int_{\Delta^{-}_{x_T}}Gv(y)dy \right|= \left|v(x_T)-\frac{1}{|\Delta^{-}_{x_T}|} \int_{\widetilde{\Delta}^{-}_{x_T}}Gv(\widetilde{y})J(\widetilde{y})d\widetilde{y} \right|,
\end{equation*}
where $J(\widetilde{y})$ is the Jacobian of the map $\widetilde{y}(y) \rightarrow y$. Applying the definition of $Gv$ from the previous section, using that $v(\widetilde{x}_T)=v(x_T)$,  and the fact that $\int_{\widetilde{\Delta}^{-}_{x_T}}J(\widetilde{y})d\widetilde{y}=|\Delta^{-}_{x_T}|$ we get
\begin{equation*}
|e_{h}(x_T)| = \left|\frac{1}{|\Delta^{-}_{x_T}|} \int_{\widetilde{\Delta}^{-}_{x_T}}(v(\widetilde{x}_T)-v(\widetilde{y}))J(\widetilde{y})d\widetilde{y} \right|\leq C \, \text{diameter}\,(\widetilde{\Delta}^{-}_{x_T})\|\nabla v\|_{L^{\infty}(\widetilde{\Delta}^{-}_{x_T})}.
\end{equation*}
We see that $\widetilde{\Delta}^{-}_{x_T}\subset B_{\frac{d}{2}h}(T)=\{ y \in \Omega_h^+: \text{dist}(y,T)< \frac{d}{2}h \}$ for a $d$  large enough but independent of $h$ and $T$.

Using an inverse estimate we have
\begin{equation*}
\text{diameter}\,(\widetilde{\Delta}^{-}_{x_T}) \|\nabla v\|_{L^{\infty}(\widetilde{\Delta}^{-}_{x_T})}\leq C \|\nabla v\|_{L^{2}(B_{d\,h}(T))}.
\end{equation*}
Hence, we get
\begin{equation*}
\|\nabla e_h\|_{L^2(\Omega)}^2 \le C  \sum_{T \in \mathcal{M}_h} \|\nabla v\|_{L^{2}( B_{d\,h}(T))}^2 \le C \, \|\nabla v\|_{L^{2}(\Omega_h^+)}^2.
\end{equation*}
Finally, using the triangle inequality we get
\begin{equation*}
\|\nabla E v\|_{L^2(\Omega)}\le \|\nabla P_1(G v)\|_{L^2(\Omega)}+ C \|\nabla v\|_{L^{2}(\Omega_h^+)}.
\end{equation*}
The result now follows from using \eqref{P1estimate} and Poincare's inequality.

\bibliography{Bibliography_BGSS}
\bibliographystyle{plain}
%

\end{document}